\newtheorem{defn}{Definition}[section]
\newtheorem{thm}[defn]{Theorem}
\newtheorem{prop}[defn]{Proposition}
\newtheorem{lemma}[defn]{Lemma}
\newtheorem{remark}[defn]{Remark}
\newcommand{\be}{\begin{equation}}
\newcommand{\ee}{\end{equation}}
\newcommand{\bea}{\begin{eqnarray}}
\newcommand{\eea}{\end{eqnarray}}
\newcommand{\beas}{\begin{eqnarray*}}
\newcommand{\eeas}{\end{eqnarray*}}
\newcommand{\R}{\mathbb{R}}
\newcommand{\E}{\mathbb{E}}
\newcommand{\ve}{\varepsilon}
\newcommand{\goto}{\rightarrow}
\newcommand{\hsp}{\hspace{.3in}}
\newcommand{\bp}{\begin{proof}}
\newcommand{\ep}{\end{proof}}
\newcommand{\TV}{\tiny{\mbox{TV}}}
\newcommand{\dstyle}{\displaystyle}
\begin{document}

\title[Bipartite Potts Model]{The Aggregate Path Coupling Method for the Potts Model on Bipartite Graph}

\author{Jos\'{e} C. Hern\'{a}ndez}
\address{University of Campinas, Institute of Mathematics, Statistics and Scientific Computation, Rua Sergio Buarque de Holanda 651, 13083-850, Campinas, SP, Brazil}
\email{josecehe@gmail.com}
\thanks{JCH would like to thank the FAPESP foundation for the financial support, project 2015/16407-8}

\author{Yevgeniy Kovchegov}
\address{Department of Mathematics, Oregon State University, Corvallis, OR  97331}
\email{kovchegy@math.oregonstate.edu}
\thanks{YK was supported in part by the NSF DMS-1412557 award.}

\author{Peter T. Otto}
\address{Department of Mathematics, Willamette University, Salem, OR 97302,  phone: 1-503-370-6487}
\email{potto@willamette.edu}

\subjclass[2000]{Primary 60J10; Secondary 60K35}

\begin{abstract}
In this paper, we derive the large deviation principle for the Potts model on the complete bipartite graph $K_{n,n}$ as $n$ increases to infinity. Next, for the Potts model on $K_{n,n}$, we provide an extension of the method of aggregate path coupling that was originally developed in \cite{KOT} for the mean-field Blume-Capel model and in \cite{KO} for a general mean-field setting that included the Generalized Curie-Weiss-Potts model analyzed in  \cite{JKRW}. We use the aggregate path coupling method to identify and determine the threshold value $\beta_s$ separating the rapid and slow mixing regimes for the Glauber dynamics of the Potts model on $K_{n,n}$.

\end{abstract}

\date{\today}

\maketitle

\section{Introduction} 

\medskip

The study of mixing times of dynamics of statistical mechanical models continues to be an active area of research, not only for its practical applications in sampling and simulations, but also because of the theoretical connections with the corresponding equilibrium phase transition structure.  Much of recent work investigating this connection has been limited to mean-field models which can be viewed as models defined on the complete graph.  These include the Curie-Weiss model \cite{LPW}, which is the Ising model on the complete graph, and the mean-field Blume-Capel model \cite{KOT}.  In these models, the macroscopic quantity is the one dimensional magnetization.  The research has also extended to the Curie-Weiss-Potts \cite{CDLLPS}, the Potts model on the complete graph, and the Generalized Curie-Weiss-Potts model \cite{KO} for which the macroscopic quantity is the higher dimensional empirical measure.  

A common phenomenon shown in these mean-field results is that for models that exhibit a continuous phase transition with respect to temperature at equilibrium, the mixing time of the Glauber dynamics also undergoes a transition from rapid to slow mixing at precisely the same critical value as the equilibrium phase transition.  On the other hand, for models that exhibit a discontinuous phase transition, the mixing time transition of the corresponding dynamics occurs at a value that differs from the equilibrium phase transition critical value.  More specifically, the mixing time transition from rapid to slow mixing occurs at the onset of metastable states of the model at equilibrium.

The equilibrium phase transition structure of these mean-field models have been obtained by deriving the large deviation principle (LDP) for the macroscopic quantities \cite{CET, EOT, JKRW} and using the upper large deviations bound to define the equilibrium states in terms of the rate function of the LDP.  To prove mixing time results of the corresponding Glauber dynamics, the standard path coupling method derived by Bubley and Dyer \cite{BD} has shown to be quite effective in determining the temperature parameter regime where rapid mixing occurs for models that undergo continuous phase transitions \cite{LPW}.  But for models that undergo discontinuous phase transitions, the standard path coupling method fails because one no longer has contraction of the mean coupling distance for every possible pairs of values of the macroscopic quantity.  For these models (which also applies to the continuous phase transition case), the method of aggregate path coupling was developed \cite{KOsurvey}  to determine the parameter regime of rapid mixing of Glauber dynamics and has been successfully applied to mean-field models that exhibit discontinuous phase transitions \cite{KOT, KO}.

In this paper, we illustrate the strength and generality of the aggregate path coupling method for proving rapid mixing by applying it to the Potts model on the bipartite graph which differs from the traditional mean-field model where every spin interacts with every other spin; i.e. interactions are defined by the complete graph.  Recent studies include \cite{BGG, C, CF, CG, FU, GT} that describe the dynamics and equilibrium structure of related models of interaction between two or more families of particles, including as a particular case the Ising/Potts model on general bipartite graphs. The contributions of this paper first include the large deviation principle for the Potts model on the bipartite graph that yields the equilibrium phase transition structure of the model and then identifying the interface value $\beta_s$ at which the Glauber dynamics exhibits rapid mixing for $\beta < \beta_s$ using the method of aggregate path coupling.  The (somewhat surprising) result obtained in Lemma \ref{beta_sq} is that the interface value $\beta_s$ for the Potts model on the bipartite graph $K_{n,n}$ is equal to the corresponding value for the Curie-Weiss-Potts model, which is the Potts model on the complete graph \cite{CDLLPS}.

The bipartite graphs are often used in the study of complex networks such as cellular and metabolic networks. They naturally arise in the statistical mechanics of complex networks. See \cite{AB, Strogatz}. The bipartite graphs are also used in the study of human disease networks \cite{GCVCVB} and in molecular biology \cite{HCLFG}. 

The paper is organized as follows. In Section \ref{bcwp} the Potts model on the bipartite graph $K_{n,n}$ is described. In Section \ref{em} the large deviation principle for the Potts model on $K_{n,n}$ is obtained, and the complete description of the equilibrium macrostates is provided for all values of the parameter $\beta$. In Section \ref{sec:gd} we introduce the Glauber dynamics on the set of configurations of the Potts model on $K_{n,n}$. At the end of  Section \ref{sec:gd}, the main result that identifies the exact parameter region of rapid mixing is stated in Theorem  \ref{rap_mixBCWP}.  Next, a greedy coupling of the Glauber dynamics is introduced for the  Potts model on the bipartite graph $K_{n,n}$ in Section \ref{coupling}. Finally, the main result Theorem  \ref{rap_mixBCWP} is proved in Section \ref{aggpath} via the method of aggregate path coupling that was introduced in \cite{KOT} and \cite{KO}.

\section{The Potts model on the Bipartite Graph} \label{bcwp} 

In this section we introduce the Potts model on the bipartite graph $K_{n,n}$ which is defined by the Gibbs measure $P_{n,n,\beta}$ in terms of the Hamiltonian energy function. Then we rewrite $P_{n,n,\beta}$ in terms of 
its magnetization vectors on each side of the bipartite graph in order to prove the large deviation principle. Finally, we define the free energy function  $\psi(\beta)$.

\medskip
Let $q$ be a fixed integer and define the collection of vectors $\Lambda = \{ e^1, e^2, \ldots, e^q \}$, where $e^k$ are the $q$ standard basis vectors of $\R^q$ referred to as possible spin values in statistical mechanical models.  Let $\rho=\frac{1}{q}\sum\limits_{i=1}^q \delta_{e^i}$ and let $P_n= \prod_{j=1}^n \rho$ denote the product measure on
$\Lambda^n$ with one-dimensional marginals $\rho$. Thus $P_n\{\sigma\}=1/q^n$ for each $\sigma\in\Lambda^n$. We
also denote by $\rho$ the probability vector in $\R^q$ all of whose coordinates equal $q^{-1}$.

A {\it configuration} of a model on the bipartite graph $K_{n,n}$ has the form $(\sigma,\tau) \in \Lambda^n \times \Lambda^n$, where the spin configuration on the left set of $n$ vertices of $K_{n,n}$ is denoted by $\sigma$ and the spin configuration on the right set of $n$ vertices is denoted by $\tau$.  The Hamiltonian for the Potts model on the bipartite graph $K_{n,n}$ is defined by 
$$H_n(\sigma,\tau)=-\frac{1}{n}\sum_{i=1}^n\sum_{j=1}^n\delta(\sigma_i,\tau_j),$$
where $\delta(u,v)=\begin{cases}
      1 & \text{ if } u=v \\
      0 & \text{ if } u \not=v
\end{cases}$.
Note that with the Hamiltonian defined as above, the interactions of the model are governed by the edges of the bipartite graph $K_{n,n}$; more specifically, the spin values of $\sigma$ on the left side of the bipartite graph $K_{n,n}$ only interact with the spin values of $\tau$ on the right side of $K_{n,n}$.

The Potts model on the bipartite graph or the bipartite Potts model (BPM) is defined by
the probability of $(\sigma,\tau) \in \Lambda^n \times \Lambda^n$, corresponding to inverse temperature 
$\beta>0$ given by the canonical ensemble or Gibbs measure
\begin{equation}\label{ensemble_BCWP}
P_{n,n,\beta}(\sigma,\tau)=\frac{1}{Z_{n,n}(\beta)}\exp(-\beta H_n(\sigma,\tau))P_n\!\times\!P_n(\sigma,\tau)
\end{equation}
where $Z_{n,n}(\beta)$ is the partition function
$$Z_{n,n}(\beta)=\int_{\Lambda^n\times\Lambda^n}\exp(-\beta H_n(\sigma,\tau))d P_n\!\times\!P_n(\sigma,\tau)=
\sum_{\sigma,\tau\in\Lambda^n}\exp(-\beta H_n(\sigma,\tau))\frac{1}{q^{2n}}.$$

In terms of the microscopic quantities, the spins at each vertex of $K_{n,n}$, the relevant macroscopic quantity is the pair of 
{\it magnetization vectors} (empirical measures or proportion vectors) $\big(L_n(\sigma), L_n(\tau)\big)\in \R^q\times\R^q$ with
\be
L_n(\omega) = \big(L_{n,1}(\omega), L_{n,2}(\omega), \ldots, L_{n,q}(\omega)\big), 
\ee
where the $k$th component is defined by 
\[ L_{n, k}(\omega) = \frac{1}{n} \sum_{i=1}^n \delta(\omega_i, e^k) \]
which yields the proportion of spins in $\omega \in \Lambda^n$ that take on the value $e^k$.  The magnetization vector $L_n$ takes values in the set of probability vectors 
\be
\label{eqn:latticesimplex} 
\mathcal{P}_n = \left\{ \left(\frac{n_1}{n}, \frac{n_2}{n}, \ldots, \frac{n_q}{n} \right) : \mbox{each} \ n_k \in \{0, 1, \ldots, n\} \ \mbox{and} \ \sum_{k=1}^q n_k = n \right\} 
\ee
which lies inside the continuous simplex
$$ \mathcal{P}_q = \left\{ \nu \in \R^q : \nu = (\nu_1, \nu_2, \ldots, \nu_q), \mbox{each} \ \nu_k \geq 0, \ \sum_{k=1}^q \nu_k = 1 \right\}.$$

\noindent
Let $\langle\cdot, \cdot\rangle$ denote the inner product on $\R^q$. Then, since 
$$
\begin{array}{ccl}
\langle L_n(\sigma),L_n(\tau)\rangle &=& \displaystyle\sum_{k=1}^q L_{n,k}(\sigma)L_{n,k}(\tau)\\
          &=& \displaystyle\sum_{k=1}^q \left(\frac{1}{n} \sum_{i=1}^n \delta(\sigma_i,e^k) \right) 
                                        \left(\frac{1}{n} \sum_{j=1}^n \delta(\tau_j,e^k) \right)\\
          &=& \displaystyle\frac{1}{n^2}\sum_{i,j=1}^n\sum_{k=1}^q \delta(\sigma_i,e^k)\delta(\tau_j,e^k)
          =\displaystyle\frac{1}{n^2}\sum_{i,j=1}^n \delta(\sigma_i,\tau_j),       
\end{array}
$$
it follows that the Hamiltonian for the bipartite Potts model can be rewritten as 
$$H_n(\sigma,\tau)=-n\langle L_n(\sigma),L_n(\tau)\rangle.$$
Hence,
$$P_{n,n,\beta}(\sigma,\tau)=\frac{1}{Z_{n,n}(\beta)}
\exp\left[n\beta\langle L_n(\sigma),L_n(\tau)\rangle\right] P_n\!\times\!P_n(\sigma,\tau),$$
where
$$Z_{n,n}(\beta)=\int_{\Lambda^n\times\Lambda^n}\exp\left[n\beta\langle L_n(\sigma),L_n(\tau)\rangle\right]d P_n\!\times\!P_n(\sigma,\tau).$$

\medskip
\noindent
The above expression of $P_{n,n,\beta}$ allows us to define the {\bf interaction representation function} for the bipartite Potts model $H: \R^q\times\R^q\to \R$ as follows
\be
\label{IRF}
H(x,y)=-\langle x,y \rangle=- x_1y_1-x_2y_2- \cdots-x_qy_q.
\ee
This function is a finite  $\mathcal{C}^{\infty}(\R^q\times\R^q)$ function satisfying
$$H_n(\sigma,\tau)=nH(L_n(\sigma),L_n(\tau)).$$
Utilizing the interaction representation function $H$, the bipartite Potts model can be expressed as 
$$P_{n,n,\beta}(B)=
\frac{1}{Z_{n,n}(\beta)}\int_B\exp\left[-\beta n H(L_n(\sigma),L_n(\tau))\right] dP_n\times dP_n,$$
where $P_n$ is the product measure with identical marginals $\rho$, $B$ belong to the $\sigma$-field of subsets of 
$\Lambda^n\times\Lambda^n$, and 
$$Z_{n,n}(\beta)=\int_{\Lambda^n\times\Lambda^n}\exp\left[-\beta n H(L_n(\sigma),L_n(\tau))\right] dP_n\times dP_n.$$

\medskip
\noindent
The free energy for the model is the quantity $\psi(\beta)$ defined by the limit
$$
\begin{array}{ccl}
-\beta\psi(\beta)&=&\lim_{n\to\infty}\frac{1}{2n}\log Z_{n,n}(\beta).
\end{array}
$$

\bigskip

\section{Equilibrium Macrostates of the Bipartite Potts Model} \label{em} 

Following the approach developed in \cite{EHT}, the equilibrium phase structure of the bipartite Potts model will be defined by the large deviation principle satisfied by $P_{n,n,\beta}$.  
By Sanov's Theorem with respect to the probability measure $P_n$, the $q$-dimensional magnetization vector $L_n$ satisfies
the large deviation principle on $\mathcal{P}_q$ with rate function expressed as the relative entropy 
\[ R(\nu | \rho) = \sum_{k=1}^q \nu_k \log \left( \frac{\nu_k}{\rho_k} \right), \]
where we let $\rho$ denote the probability vector in $\R^q$ all of whose coordinates equal $q^{-1}$.

\medskip
\noindent
Therefore, the $2q$-dimensional magnetization vector $(L_n,L_n)$ satisfies the large deviation principle with respect to the product measure $P_n\!\times\!P_n$ over 
$\mathcal{P}_q\times\mathcal{P}_q$ with rate function given by the sum of relative entropies, that is,
\begin{equation}
P_n\!\times\!P_n((L_n,L_n)\in d\gamma\times d\nu)\approx e^{-n(R(\gamma | \rho)+R(\nu | \rho))}.
\end{equation}

\medskip
\noindent
Denote $R((\gamma,\nu)|\rho)=R(\gamma | \rho)+R(\nu | \rho)$. Now, since $(L_n,L_n)$ satisfies the large deviation principle on 
$\mathcal{P}_q\times\mathcal{P}_q$ with respect to $P_n\!\times\!P_n$ with rate function $R((\gamma,\nu)|\rho)$, the Laplace principle implies the following lemma (see \cite{Ellis} and \cite{Varadhan}).

\begin{lemma}\label{free_E} 
		The free energy for the model satisfies
		\be 
		-\beta\psi(\beta) = \sup_{(\gamma, \nu)\in\mathcal{P}_q\times\mathcal{P}_q} \alpha_\beta(\gamma,\nu).
		\ee
		where 
		\begin{equation}
		\alpha_\beta(\gamma,\nu)=\beta\langle \gamma, \nu \rangle - R((\gamma,\nu)|\rho)  
		\end{equation}
	\end{lemma}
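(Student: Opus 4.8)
The plan is to obtain Lemma \ref{free_E} as a direct application of the Laplace principle (equivalently, Varadhan's integral lemma) to the large deviation principle for the pair $(L_n(\sigma),L_n(\tau))$ recalled just above. The first step is to recast the partition function so that the integrand sees the configuration only through the macroscopic pair: since $H_n(\sigma,\tau)=nH(L_n(\sigma),L_n(\tau))=-n\langle L_n(\sigma),L_n(\tau)\rangle$, we may write
\[
Z_{n,n}(\beta)=\int_{\Lambda^n\times\Lambda^n}\exp\!\big[\,n\,\Phi(L_n(\sigma),L_n(\tau))\,\big]\,dP_n\times dP_n,\qquad \Phi(\gamma,\nu):=\beta\langle\gamma,\nu\rangle=-\beta H(\gamma,\nu).
\]

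The second step is to check that the hypotheses of the Laplace principle hold. The state space $\mathcal{P}_q\times\mathcal{P}_q$ is compact; on it $\Phi$ is continuous (in fact bilinear and $\mathcal{C}^\infty$) and bounded, because $0\le\langle\gamma,\nu\rangle\le 1$ for probability vectors $\gamma,\nu$; and the rate function $R((\gamma,\nu)|\rho)=R(\gamma|\rho)+R(\nu|\rho)$ is lower semicontinuous on a compact set, hence automatically a good rate function. Sanov's theorem supplies the LDP for $L_n$ on $\mathcal{P}_q$ with rate $R(\cdot|\rho)$, and since $\sigma$ and $\tau$ are independent under $P_n\times P_n$, the tensorization of large deviation principles gives the LDP for $(L_n(\sigma),L_n(\tau))$ on $\mathcal{P}_q\times\mathcal{P}_q$ at speed $n$ with rate function $R((\gamma,\nu)|\rho)$ --- precisely the statement recorded in the displayed approximation preceding the lemma.

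The final step is to invoke the Laplace principle, which then yields
\[
\lim_{n\to\infty}\frac{1}{n}\log Z_{n,n}(\beta)=\sup_{(\gamma,\nu)\in\mathcal{P}_q\times\mathcal{P}_q}\big[\,\Phi(\gamma,\nu)-R((\gamma,\nu)|\rho)\,\big]=\sup_{(\gamma,\nu)\in\mathcal{P}_q\times\mathcal{P}_q}\alpha_\beta(\gamma,\nu),
\]
and comparing with the definition $-\beta\psi(\beta)=\lim_{n\to\infty}\frac{1}{2n}\log Z_{n,n}(\beta)$ identifies the free energy with the asserted variational formula. No step here presents a genuine obstacle: this is a textbook use of Varadhan's lemma, and the only points that warrant a line of verification are that $\Phi$ is bounded and continuous on the simplex product (so that the bounded, continuous form of the principle applies with no auxiliary moment condition) and that the product LDP is used at the correct speed $n$; compactness of $\mathcal{P}_q\times\mathcal{P}_q$ makes the remaining regularity requirements automatic.
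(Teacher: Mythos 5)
Your proof is correct and is essentially the paper's own argument: the paper obtains the lemma in one line by invoking the Laplace principle for the product Sanov LDP of $(L_n,L_n)$ at speed $n$, and you simply spell out the hypotheses (compactness of $\mathcal{P}_q\times\mathcal{P}_q$, boundedness and continuity of $(\gamma,\nu)\mapsto\beta\langle\gamma,\nu\rangle$, goodness of the rate function), which are indeed automatic here. One caveat, which you share with the paper: Varadhan's lemma gives $\lim_{n\to\infty}\frac{1}{n}\log Z_{n,n}(\beta)=\sup_{(\gamma,\nu)}\alpha_\beta(\gamma,\nu)$, while $\psi$ was defined through the per-vertex normalization $\frac{1}{2n}\log Z_{n,n}(\beta)$, so the literal comparison in your last step yields $-\beta\psi(\beta)=\frac{1}{2}\sup\alpha_\beta$; this factor $\frac{1}{2}$ is a normalization inconsistency already built into the paper's statement (and harmless, since it does not change the maximizing set $\mathcal{E}_\beta$), but your closing sentence "comparing with the definition" passes over it silently.
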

	
\medskip
\noindent
Next, applying Lemma \ref{free_E}, we obtain the following large deviation principle for the Potts model on $K_{n,n}$.

\begin{thm}\label{LDP_Knn}
The empirical vector pair $(L_n,L_n)$ satisfies 
the large deviation principle with respect to the canonical ensemble probability measure $P_{n,n,\beta}$, as defined in (\ref{ensemble_BCWP}), on $\mathcal{P}_q\times\mathcal{P}_q$ 
with the rate function
$$I_{\beta}(\gamma,\nu)=R((\gamma,\nu) |\rho) +\beta H(\gamma,\nu)-
\inf_{\gamma^{\prime},\nu^{\prime}} \{ R((\gamma^{\prime},\nu^{\prime}) |\rho) +\beta H(\gamma^{\prime},\nu^{\prime})\},$$
where $H$ is the interaction representation function defined in (\ref{IRF}). Specifically, for any closed subset $F$ of $\mathcal{P}_q\times\mathcal{P}_q$,
\be
\label{eqn:ldpbound}
 \limsup_{n \goto \infty} \frac{1}{n} \log P_{n,n, \beta} ( (L_n, L_n) \in F ) \leq - \inf_{(\gamma,\nu) \in F} I_\beta ((\gamma,\nu)) 
 \ee
 and for any open subset $G$ of $\mathcal{P}_q\times\mathcal{P}_q$,
 \[ 
  \liminf_{n \goto \infty} \frac{1}{n} \log P_{n, n, \beta} ( (L_n, L_n) \in G ) \geq - \inf_{(\gamma,\nu) \in G} I_\beta ((\gamma,\nu)).
\]  
\end{thm}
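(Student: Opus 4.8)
The plan is to deduce this LDP from the LDP for $(L_n,L_n)$ under the product measure $P_n\times P_n$ together with the exponential tilt that defines the Gibbs measure, following the approach of \cite{EHT}. The two structural facts we use are: (i) $(L_n,L_n)$ satisfies the LDP on the compact space $\mathcal{P}_q\times\mathcal{P}_q$ with respect to $P_n\times P_n$ at speed $n$ with rate function $R((\gamma,\nu)|\rho)$, as already recorded above via Sanov's theorem; and (ii) the interaction representation function $H(x,y)=-\langle x,y\rangle$ is continuous, and since the relevant integrals evaluate it only on the compact simplex $\mathcal{P}_q\times\mathcal{P}_q$, it is bounded there. Continuity and boundedness are precisely what make the Laplace/Varadhan estimates below available with no additional integrability hypotheses (see \cite{Ellis}).

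First I would pin down the asymptotics of the partition function. Writing $Z_{n,n}(\beta)=\int\exp[-\beta n H(L_n(\sigma),L_n(\tau))]\,dP_n\times dP_n$ and applying Varadhan's lemma to the bounded continuous functional $-\beta H$ against the LDP in (i),
\be
\lim_{n\to\infty}\frac1n\log Z_{n,n}(\beta)=\sup_{(\gamma,\nu)\in\mathcal{P}_q\times\mathcal{P}_q}\bigl\{-\beta H(\gamma,\nu)-R((\gamma,\nu)|\rho)\bigr\}=-\inf_{(\gamma,\nu)\in\mathcal{P}_q\times\mathcal{P}_q}\bigl\{\beta H(\gamma,\nu)+R((\gamma,\nu)|\rho)\bigr\}.
\ee
The supremum appearing here is exactly the one in Lemma \ref{free_E}, so in particular this limit is a finite constant.

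Next I would establish the two bounds. For a closed $F\subset\mathcal{P}_q\times\mathcal{P}_q$,
\be
P_{n,n,\beta}\bigl((L_n,L_n)\in F\bigr)=\frac{1}{Z_{n,n}(\beta)}\int_{\{(L_n,L_n)\in F\}}\exp[-\beta n H(L_n(\sigma),L_n(\tau))]\,dP_n\times dP_n,
\ee
and the upper-bound half of Varadhan's lemma (the Laplace principle over closed sets), available because $H$ is continuous and bounded on $\mathcal{P}_q\times\mathcal{P}_q$, gives
\be
\limsup_{n\to\infty}\frac1n\log\int_{\{(L_n,L_n)\in F\}}\exp[-\beta n H]\,dP_n\times dP_n\le-\inf_{(\gamma,\nu)\in F}\bigl\{\beta H(\gamma,\nu)+R((\gamma,\nu)|\rho)\bigr\}.
\ee
Dividing by $Z_{n,n}(\beta)$ and invoking the partition-function limit, the constant $\inf_{\mathcal{P}_q\times\mathcal{P}_q}\{\beta H+R\}$ cancels against $-\inf_F\{\beta H+R\}$ to produce exactly $-\inf_F I_\beta$, which is (\ref{eqn:ldpbound}). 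For an open $G\subset\mathcal{P}_q\times\mathcal{P}_q$, the lower-bound half of the Laplace principle gives, in the same way, $\liminf_n\frac1n\log\int_{\{(L_n,L_n)\in G\}}\exp[-\beta n H]\,dP_n\times dP_n\ge-\inf_{(\gamma,\nu)\in G}\{\beta H+R\}$, and dividing by $Z_{n,n}(\beta)$ yields $\liminf_n\frac1n\log P_{n,n,\beta}((L_n,L_n)\in G)\ge-\inf_G I_\beta$.

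I do not expect a serious obstacle: each step is a routine manipulation of large deviations under an exponential change of measure, and the compactness of $\mathcal{P}_q\times\mathcal{P}_q$ removes the one place — exponential integrability/tightness of the tilt — where such arguments usually require care. The only point worth stating explicitly is the bookkeeping of the normalization: the partition function contributes precisely the additive constant $\inf_{\mathcal{P}_q\times\mathcal{P}_q}\{\beta H+R\}$ to the rate function, so that $I_\beta\ge0$ automatically and $I_\beta$ vanishes exactly on the minimizers of $\beta H(\gamma,\nu)+R((\gamma,\nu)|\rho)$ — the equilibrium macrostates analyzed in the remainder of Section \ref{em}.
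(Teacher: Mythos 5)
Your proposal is correct and follows essentially the same route the paper takes: the paper states that Theorem~\ref{LDP_Knn} is obtained ``by applying Lemma~\ref{free_E}'' and cites the approach of \cite{EHT}, which is precisely the exponential-tilting/Laplace-principle argument you spell out. The paper omits the details; you supply them faithfully, with the key observations — Sanov LDP for $(L_n,L_n)$ under $P_n\times P_n$, continuity and boundedness of $H$ on the compact simplex (so Varadhan applies without extra integrability hypotheses), Varadhan for $Z_{n,n}$, and the cancellation of the normalization constant that turns $\beta H+R$ into the centered rate $I_\beta$.

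One small bookkeeping remark worth flagging, since it touches your use of Lemma~\ref{free_E}: the paper defines the free energy via $-\beta\psi(\beta)=\lim_n \tfrac{1}{2n}\log Z_{n,n}(\beta)$ (normalizing by the $2n$ vertices of $K_{n,n}$), whereas the LDP for $(L_n,L_n)$ holds at speed $n$, and Varadhan then gives $\lim_n\tfrac1n\log Z_{n,n}(\beta)=\sup\alpha_\beta$ — which is what you use and what the LDP of Theorem~\ref{LDP_Knn} requires. Your computation is the internally consistent one; the paper's $\tfrac{1}{2n}$ normalization of $\psi$ is a separate convention that does not affect the statement or proof of the LDP, so your proof stands as written.
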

	
\medskip
\noindent
The LDP upper bound (\ref{eqn:ldpbound}) stated in the above theorem yields the following natural definition of the {\bf equilibrium macrostates} for the bipartite Potts model
$$
\mathcal{E}_\beta ~=~ \left\{ (\gamma,\nu) \in \mathcal{P}_q\times\mathcal{P}_q :  I_{\beta}(\gamma,\nu)=0 \right\}
~=~ \left\{ (\gamma,\nu) \in \mathcal{P}_q\times\mathcal{P}_q :  (\gamma,\nu) \ \mbox{maximizes} \ \alpha_\beta(\gamma,\nu) \right\}.
$$

\medskip
\begin{remark}
In \cite{KO} it is assumed that the interaction representation function $H$ is concave in order to guarantee that the free energy functional $G_\beta$ defined in (\ref{FEF}) has a maximum. In the case of the bipartite model, $H$ is neither convex nor concave over $\mathcal{P}_q\times\mathcal{P}_q$, but the properties of $H$ are sufficient to guarantee the existence of a unique macrostate in the single phase region.
\end{remark}
	
\medskip
\noindent
We analyze the equilibrium macrostates $\mathcal{E}_\beta$  by writing $\alpha_\beta(\gamma,\nu)$ as follows
\be\label{alpha_func}
\alpha_\beta(\gamma,\nu)=\left( \frac{\beta}{2}\langle \gamma,\gamma\rangle-R(\gamma|\rho)  \right) +
\left( \frac{\beta}{2}\langle \nu,\nu\rangle-R(\nu|\rho) \right)-\frac{\beta}{2}\|\gamma-\nu \|^2,
\ee
and arriving to the following lemma.

\begin{lemma}
\label{maxidentity}
The maximum of $\alpha_\beta(\gamma,\nu)$ occurs on the identity line $\gamma=\nu$.
\end{lemma}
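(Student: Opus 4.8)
The plan is to exploit the decomposition (\ref{alpha_func}) directly. Write
$\alpha_\beta(\gamma,\nu) = f(\gamma) + f(\nu) - \tfrac{\beta}{2}\|\gamma-\nu\|^2$,
where $f(\mu) = \tfrac{\beta}{2}\langle\mu,\mu\rangle - R(\mu|\rho)$. Suppose $(\gamma^*,\nu^*)$ is a maximizer of $\alpha_\beta$ over $\mathcal{P}_q\times\mathcal{P}_q$ with $\gamma^*\neq\nu^*$. I would then compare its value with that of the two ``symmetrized'' pairs $(\gamma^*,\gamma^*)$ and $(\nu^*,\nu^*)$, both of which lie on the identity line. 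The key arithmetic observation is that
$$
\alpha_\beta(\gamma^*,\gamma^*) + \alpha_\beta(\nu^*,\nu^*) = 2f(\gamma^*) + 2f(\nu^*) = 2\big(\alpha_\beta(\gamma^*,\nu^*) + \tfrac{\beta}{2}\|\gamma^*-\nu^*\|^2\big),
$$
so that the average of the two diagonal values exceeds $\alpha_\beta(\gamma^*,\nu^*)$ by exactly $\tfrac{\beta}{2}\|\gamma^*-\nu^*\|^2 > 0$ (here $\beta>0$). Hence at least one of $(\gamma^*,\gamma^*)$, $(\nu^*,\nu^*)$ has strictly larger $\alpha_\beta$-value than $(\gamma^*,\nu^*)$, contradicting maximality. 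Therefore every maximizer lies on $\gamma=\nu$, and consequently the supremum of $\alpha_\beta$ is attained there.

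To make this fully rigorous I should first record that the supremum is actually attained: $\mathcal{P}_q\times\mathcal{P}_q$ is compact, $R(\cdot|\rho)$ is continuous on $\mathcal{P}_q$ (with the usual convention $0\log 0 = 0$), and $\langle\cdot,\cdot\rangle$ is continuous, so $\alpha_\beta$ is continuous and attains its maximum. This is the only place where I need more than the stated algebra, and it is standard. I would also note that the identity $\alpha_\beta(\gamma,\gamma) = 2f(\gamma) = \beta\langle\gamma,\gamma\rangle - 2R(\gamma|\rho)$ follows immediately from (\ref{alpha_func}) by setting $\nu=\gamma$ and dropping the vanishing $\|\gamma-\nu\|^2$ term.

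The main (and really only) obstacle is conceptual rather than technical: one has to notice the right comparison pairs. A naive approach — trying to show the unconstrained critical-point equations $\nabla_\gamma\alpha_\beta = \nabla_\nu\alpha_\beta = 0$ force $\gamma=\nu$ — runs into the fact that these are coupled transcendental equations ($\beta\nu_k = \log(\gamma_k/\rho_k) + \text{const}$ and symmetrically), and while they do admit non-diagonal solutions in principle, ruling them out directly is awkward because $H$ is neither convex nor concave (as the remark preceding the lemma emphasizes). The symmetrization argument sidesteps all of this: it uses only that the ``diagonal part'' $f(\gamma)+f(\nu)$ is symmetric under swapping $\gamma\leftrightarrow\nu$ and that the ``off-diagonal penalty'' $\tfrac{\beta}{2}\|\gamma-\nu\|^2$ is strictly positive off the diagonal, so no regularity or convexity of $f$ is needed at all. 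I would present the argument in the contrapositive form above, as it is the cleanest.
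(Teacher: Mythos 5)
Your argument is correct, and it is a genuinely more self-contained route than the paper's. Both proofs start from the same decomposition (\ref{alpha_func}), $\alpha_\beta(\gamma,\nu)=f(\gamma)+f(\nu)-\tfrac{\beta}{2}\|\gamma-\nu\|^2$ with $f(z)=\tfrac{\beta}{2}\langle z,z\rangle-R(z|\rho)$, but they exploit it differently. The paper takes a point $z\in K_\beta$, the set of global maximizers of $f$ on $\mathcal{P}_q$ as characterized by Ellis and Wang, and observes that $\alpha_\beta(\gamma,\nu)\le f(\gamma)+f(\nu)\le 2f(z)=\alpha_\beta(z,z)$ for all $(\gamma,\nu)$; this simultaneously proves the lemma and identifies exactly which diagonal points carry the maximum, which is what the paper needs downstream for Theorem \ref{LD_BCWP}. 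Your symmetrization-by-contradiction argument needs no external input beyond continuity of $\alpha_\beta$ and compactness of $\mathcal{P}_q\times\mathcal{P}_q$ (to have a maximizer at all), and it yields the slightly stronger conclusion that \emph{every} maximizer lies on the diagonal, not merely that some diagonal point attains the supremum; the identity $\alpha_\beta(\gamma^*,\gamma^*)+\alpha_\beta(\nu^*,\nu^*)=2\alpha_\beta(\gamma^*,\nu^*)+\beta\|\gamma^*-\nu^*\|^2$ with $\beta>0$ is checked correctly and does the whole job. What you give up relative to the paper is the explicit location of the diagonal maximizers (the Ellis--Wang set $K_\beta$), but that is not part of the lemma's statement and the paper reintroduces it later anyway, so as a proof of the lemma itself your version is complete and arguably cleaner.
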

\bp
Consider the function $f(z)=\frac{\beta}{2}\langle z, z\rangle-R(z|\rho)$ over the compact set $\mathcal{P}_q$. Denote by $K_\beta$ the set of global maximum points of the function $f$ on $\mathcal{P}_q$. In \cite{EW},  Ellis and Wang  show that 
 $$K_\beta=\left\{
 \begin{array}{lcl}
 \{ \rho \} &\mbox{for}& 0<\beta<\beta_c,\\
 \{ z^{1}(\beta),\dots,  z^{q}(\beta)\} &\mbox{for}& \beta>\beta_c,\\
 \{ \rho, z^{1}(\beta_c),\dots,  z^{q}(\beta_c)\} &\mbox{for}& \beta=\beta_c,
 \end{array} 
 \right.$$
 where, as before, $\rho=\left( \frac{1}{q} , \dots, \frac{1}{q}\right)$.
 For $\beta\geq \beta_c$, the points in $K_\beta$ are all distinct, and $$\beta_c=\frac{2(q-1)}{q-2}\log(q-1).$$
 	
\medskip
\noindent
 Now, for a given $z\in K_\beta$, Theorem 2.1 in \cite{EW} implies that  
 $$\alpha_\beta (\gamma,\nu) \leq \alpha_\beta (z,z),\quad\mbox{for all}\quad (\gamma,\nu)\in \mathcal{P}_q\times\mathcal{P}_q.$$
 Hence, the maximum of $\alpha_\beta(\gamma,\nu)$ occurs on the identity line $\gamma=\nu$.
\ep

\medskip
\noindent
Following Lemma \ref{maxidentity}, in order to compute the equilibrium macrostates of the bipartite Potts model we need to minimize the function $ -\alpha_{\beta}(\gamma,\nu)$ restricted to the set 
$(\mathcal{P}_q\times\mathcal{P}_q) \cap \{(\gamma,\nu)\in\R^q\times\R^q : \gamma=\nu\}$. Thus, we rewrite the 
set $\mathcal{E}_\beta$ as follow
$$\mathcal{E}_\beta= \left\{ (\gamma,\gamma) \in \mathcal{P}_q\times\mathcal{P}_q : 
\gamma \ \mbox{minimizes} \ R(\gamma| \rho) - \frac{\beta}{2}\langle \gamma,\gamma\rangle \right\}.$$
Borrowing from the notations in \cite{EW}, we denote $\alpha_\beta(\gamma)=\frac{\beta}{2}\langle \gamma,\gamma\rangle - R(\gamma| \rho)$. 
The function $\alpha_\beta(\cdot)$ was used to study the Curie-Weiss-Potts model,  
and the global minima of $R(\gamma| \rho) - \frac{\beta}{2}\langle \gamma,\gamma\rangle=-\alpha_{\beta}(\gamma)$ were obtained in \cite{EW} and \cite{Wu}.
Hence, the corresponding result describing the structure of the set $\mathcal{E}_\beta$ for the bipartite Potts model follows.

\medskip
\noindent
Define the function $\varphi:[0,1]\to \mathcal{P}_q$ as follows 
$$\varphi(s)=(q^{-1}[1+(q-1)s], q^{-1}(1-s),\dots,q^{-1}(1-s)).$$

\begin{thm}\label{LD_BCWP}
Fix a positive integer $q\geq 3$. Let $\beta_c=(2(q-1)/(q-2))\log(q-1)$, and for $\beta>0$ let $s(\beta)$ denote the 
largest root of the equation
$$s=\frac{1-e^{\beta s}}{1+(q-1)e^{-\beta s}}.$$
The following conclusions hold.
\begin{enumerate}
 \item[(a)] The quantity $s(\beta)$ is well defined. It is positive, strictly increasing, and differentiable with respect to 
 $\beta$ in an open interval containing $[\beta_c,\infty)$. Also, $s(\beta_c)=(q-2)/(q-1)$, and $\lim_{\beta\to\infty}s(\beta)=1$.
 \item[(b)] For $\beta\geq\beta_c$, define $\nu^{1}=\varphi(s(\beta))$ and let $\nu^{i}$ $~(i=1,2,\dots,q)~$ denote the points in $\R^{q}$ obtained by interchanging the first and the $i^{th}$ coordinates of $\nu^{1}$. Then
 $$\mathcal{E}_\beta=\left\{
\begin{array}{ccl}
\{ (\rho,\rho) \} &\mbox{for}& 0<\beta<\beta_c,\\
\{ (\nu^{1},\nu^{1}),\dots,  (\nu^{q},\nu^{q})\} &\mbox{for}& \beta>\beta_c,\\
\{ (\rho,\rho), (\nu^{1},\nu^{1}),\dots,  (\nu^{q},\nu^{q})\} &\mbox{for}& \beta=\beta_c
\end{array} 
\right.$$
For all $\beta\geq\beta_c$, the points in $\mathcal{E}_\beta$ are all distinct. The point $\nu^{1}(\beta_c)$  equals $\varphi(s(\beta_c))=\varphi((q-2)/(q-1))$.
\end{enumerate}
\end{thm}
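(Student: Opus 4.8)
The plan is to reduce the two–variable variational problem defining $\mathcal{E}_\beta$ to the one–variable problem for the Curie–Weiss–Potts model already solved in \cite{EW} and \cite{Wu}, and then to transcribe their structure theorem into the present notation. By Lemma \ref{maxidentity} every maximizer of $\alpha_\beta$ on $\mathcal{P}_q\times\mathcal{P}_q$ lies on the diagonal, and the discussion following that lemma identifies
\[
\mathcal{E}_\beta=\big\{(\gamma,\gamma):\gamma\ \text{minimizes}\ R(\gamma\,|\,\rho)-\tfrac{\beta}{2}\langle\gamma,\gamma\rangle\ \text{over}\ \mathcal{P}_q\big\}=\{(\gamma,\gamma):\gamma\in K_\beta\},
\]
where $K_\beta$ denotes the set of global maximizers of $\alpha_\beta(\gamma)=\tfrac{\beta}{2}\langle\gamma,\gamma\rangle-R(\gamma\,|\,\rho)$ on $\mathcal{P}_q$. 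Thus the whole theorem is a statement about $K_\beta$, and the only genuinely model–specific step, the passage to the diagonal, is already in hand.

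I would then recall how $K_\beta$ is determined. The Lagrange condition for a critical point of $\alpha_\beta$ on the simplex reads $\beta\gamma_k-\log(q\gamma_k)=c$ for all $k$ and a common constant $c$; since $t\mapsto\beta t-\log t$ is strictly convex, each such scalar equation has at most two solutions, so a critical point takes at most two distinct coordinate values. An ordering/symmetrization argument (this is the heart of \cite{EW}) shows that a \emph{global} maximizer is, up to a permutation of coordinates, either $\rho$ or the one–parameter vector $\varphi(s)$ with a single distinguished coordinate; substituting $\varphi(s)$ into the critical–point equation reduces it to the scalar fixed–point equation of the statement, whose relevant solution is the largest root $s(\beta)$. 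I expect the symmetry–breaking step — excluding maximizers with two or more equal ``large'' coordinates — to be the main obstacle, but it is exactly the portion that may be quoted from \cite{EW} (the hypothesis $q\geq 3$ ensures $\beta_c$ is finite and the Ellis–Wang phase picture applies).

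For part (a) I would work directly with the scalar equation. Let $g_\beta(s)$ be $s$ minus the right–hand side of the equation in the statement; then $s=0$ is always a root, and one checks that for every $\beta$ in an open interval containing $[\beta_c,\infty)$ the function $g_\beta$ has a strictly positive largest root and that $\partial g_\beta/\partial s$ does not vanish there. The implicit function theorem then gives that $s(\beta)$ is well defined and differentiable, and differentiating the defining identity and inspecting the sign yields $s'(\beta)>0$. The endpoint value $s(\beta_c)=(q-2)/(q-1)$ follows by substituting $\beta=\beta_c=\frac{2(q-1)}{q-2}\log(q-1)$ into the equation, and $\lim_{\beta\to\infty}s(\beta)=1$ follows by letting $\beta\to\infty$ in it.

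For part (b) I would compare the two candidate maximizers. Writing $\nu^1=\varphi(s(\beta))$ and letting $\nu^2,\dots,\nu^q$ be its coordinate permutations, the value $\alpha_\beta(\nu^1)$ must be compared with $\alpha_\beta(\rho)$; the function $\beta\mapsto\alpha_\beta(\nu^1)-\alpha_\beta(\rho)$ is negative for $\beta<\beta_c$, zero at $\beta=\beta_c$, and positive for $\beta>\beta_c$, which pins the transition to $\beta_c$ and yields the three–case description of $K_\beta$, hence of $\mathcal{E}_\beta$ after pairing on the diagonal. Distinctness of the points in $\mathcal{E}_\beta$ for $\beta\geq\beta_c$ is immediate since $s(\beta)>0$ there makes $\nu^1,\dots,\nu^q$ pairwise distinct and distinct from $\rho$; and $\nu^1(\beta_c)=\varphi(s(\beta_c))=\varphi((q-2)/(q-1))$ is part (a).
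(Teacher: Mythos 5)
Your proposal is correct and follows essentially the same route as the paper: the paper's entire argument is the reduction to the diagonal via Lemma \ref{maxidentity} followed by quoting the Ellis--Wang/Wu description of the global minimizers of $R(\gamma\,|\,\rho)-\frac{\beta}{2}\langle\gamma,\gamma\rangle$ from \cite{EW} and \cite{Wu}, which is exactly your identification $\mathcal{E}_\beta=\{(\gamma,\gamma):\gamma\in K_\beta\}$. The extra detail you sketch about the Lagrange condition, the scalar fixed-point equation (whose numerator should read $1-e^{-\beta s}$ as in \cite{EW}; the sign in the statement is a typo) and the comparison $\alpha_\beta(\nu^1)$ versus $\alpha_\beta(\rho)$ is consistent with the cited results but is not spelled out in the paper either.
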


The behavior exhibited by the set of equilibrium macrostates $\mathcal{E}_\beta$ for the bipartite Potts model stated in Theorem \ref{LD_BCWP} is referred to as a {\it discontinuous, first-order phase transition} with respect to the parameter $\beta$.  This is because as $\beta$ passes through the critical value $\beta_c$ from below, in the set of equilibrium macrostates $\mathcal{E}_\beta$, a spontaneous emergence of additional {\it distinct} macrostates occurs.  The Potts model on the complete graph also exhibits a discontinuous, first-order phase transition.  As will be shown in Section \ref{aggpath}, it was for this type of equilibrium phase transition for which the classical path coupling method failed and for which the aggregate path coupling method was developed to solve. See \cite{KOT} and \cite{KO}.

An important quantity, called the free energy functional, is defined below in terms of the interaction representation function $H$ and the logarithmic moment generating function. The {\bf logarithmic moment generating function} for the bipartite Potts model is 
\be\label{lgfKnn}
\Gamma(x,y)=\log\left( \frac{1}{q}\sum_{i=1}^{q} e^{x_i}\right) + \log\left( \frac{1}{q}\sum_{i=1}^{q} e^{y_i}\right)
\ee
and the {\bf free energy functional} for the canonical ensemble $P_{n,n,\beta}$ is 
\be
\label{FEF}
G_{\beta}(x,y) =\beta(-H)^{*}(-\nabla H(x,y)) - \Gamma(-\beta\nabla H(x,y)),
\ee
where $H^\ast$ denotes the Legendre-Fenchel transform defined below. As  $H(x,y)=-\langle x,y\rangle$, we have $\nabla H(x,y)=-(y_1,\dots,y_q, x_1,\dots,x_q)$ and 
$$
\begin{array}{ccl}
(-H)^{*}(x,y) &=& \sup_{(z,w)\in \R^{q}\times\R^{q}}\{\langle (z,w),(x,y) \rangle + H(z,w)\}\\
 &=&  \sup_{(z,w)\in \R^{q}\times\R^{q}}\{\langle z,x \rangle + \langle w,y \rangle - \langle z,w\rangle \}\\
 &=& \langle x,y\rangle.
\end{array}
$$
Therefore
$$
\begin{array}{ccl}
G_{\beta}(x,y) &=& \beta(-H)^{*}(-\nabla H(x,y)) - \Gamma(-\beta\nabla H(x,y))\\
&=&  \beta (-H)^{*}(y,x)-\Gamma (\beta x,\beta y)\\
&=& \beta \langle x,y\rangle - \log\left( \frac{1}{q}\displaystyle\sum_{i=1}^{q} e^{\beta x_i}\right) - \log\left( \frac{1}{q}\displaystyle\sum_{i=1}^{q} e^{\beta y_i}\right).
\end{array}
$$
Next, employing the identity $2\langle x, y\rangle= \| x\|^{2}+\| y\|^{2} -\| x-y\|^{2}$, and defining 
$$G_{\beta}(x)=\frac{\beta}{2}\langle x,x\rangle - \log\left( \frac{1}{q}\displaystyle\sum_{i=1}^{q} \exp\{\beta x_i\}\right),$$ 
for all $x\in\R^{q}$, we rewrite the function  $G_{\beta}(x,y) $ as follow
\be
G_{\beta}(x,y) =G_{\beta}(x)+G_{\beta}(y)-\frac{\beta}{2}\| x-y\|^{2}.  
\ee
Then, by Theorem A.1  in \cite{CET} (or a more general version stated in Lemma 3.3 of \cite{KO}) we have  that 
$$\sup_{(x,y)\in\mathcal{P}_q\times\mathcal{P}_q}\{ \alpha_{\beta}(x,y) \}=\inf_{(x,y)\in\R^{q}\times\R^{q}}\{ G_{\beta}(x,y) \}.$$

\bigskip

\section{Glauber Dynamics and Mixing Times} \label{sec:gd}

Below, we define the Glauber dynamics for the bipartite Potts model over the configuration space $\Lambda^n \times \Lambda^n$.  These dynamics are governed by a reversible Markov chain with stationary distribution $P_{n,n, \beta}$ defined in (\ref{ensemble_BCWP}). There, on each time step we perform the following 
\begin{description}
  \item[i] Select a vertex $i$ on $K_{n,n}$ uniformly;
  \item[ii] Update the spin at vertex $i$ according to the distribution $P_{n, n, \beta}$, conditioned on the event that the spins at all vertices not equal to $i$ remain unchanged.
\end{description}

\medskip
\noindent
Specifically, for any given configuration $\omega=(\omega_1,\dots,\omega_n) \in\Lambda^{n}$, denote by $\omega_{i,e^{k}}$ the configuration that agrees with $\omega$ at all vertices $j\neq i$ and the spin at vertex $i$ is $e^{k}$; i.e.
$$\omega_{i,e^{k}}=(\omega_1,\dots,\omega_{i-1},e^{k},\omega_{i+1},\dots,\omega_n).$$
Then, on the bipartite graph $K_{n,n}$, if the current configuration is $(\sigma,\tau)$, there are two possible update probabilities for the Glauber dynamics of the bipartite Potts model.  These are 
\be\label{gdleft}
P((\sigma,\tau)\to (\sigma_{i,e^{k}},\tau))=\frac{e^{-\beta n H(L_n(\sigma_{i,e^{k}}), L_n(\tau))}}{\displaystyle\sum_{l=1}^{q}e^{-\beta n H(L_n(\sigma_{i,e^{l}}), L_n(\tau))}}
\ee
and 
\be\label{gdright}
P((\sigma,\tau)\to (\sigma,\tau_{i,e^{k}}))=\frac{e^{-\beta n H(L_n(\sigma), L_n(\tau_{i,e^{k}}))}}{\displaystyle\sum_{l=1}^{q}e^{-\beta n H(L_n(\sigma), L_n(\tau_{i,e^{l}}))}}
\ee
Note that vertex $i$, on the left or right side of $K_{n,n}$, is selected uniformly.

\medskip
\noindent
Now, as it was done in \cite{KO}, we show that the update probabilities of the Glauber dynamics introduced above can be expressed in terms of the derivative of the logarithmic moment generating function 
$\Gamma$ defined in (\ref{lgfKnn}). For our analysis we introduce the following two functions 
\be\label{g_left}
g_{x_l}^{H,\beta}(x,y)=[\partial_{x_l}\Gamma]\big(-\beta\nabla H(x,y)\big)= \frac{e^{\beta y_l}}{\displaystyle\sum_{k=1}^{q}e^{\beta y_k}}
\ee
and
\be\label{g_right}
g_{y_l}^{H,\beta}(x,y)=[\partial_{y_l}\Gamma]\big(-\beta\nabla H(x,y)\big)= \frac{e^{\beta x_l}}{\displaystyle\sum_{k=1}^{q}e^{\beta x_k}}
\ee
Next, we make an important observation that will be used later in the paper. We notice that $g_{x_l}^{H,\beta}(x,y)$ only depends on $y$ and $g_{y_l}^{H,\beta}(x,y)$ only depends on $x$.

\medskip
\noindent
Also we define $g^{H,\beta}:~\mathcal{P}_q\times\mathcal{P}_q \rightarrow \mathcal{P}_q\times\mathcal{P}_q$ as follows:
$$g^{H,\beta}(x,y)=\big((g_{x_1}^{H,\beta},\dots,g_{x_q}^{H,\beta}), (g_{y_1}^{H,\beta},\dots,g_{y_q}^{H,\beta})\big).$$
Our next result is the following lemma.

\begin{lemma}\label{prob_trans}
Let $P\big((\sigma,\tau)\to (\sigma_{i,e^{k}},\tau)\big)$ and  $P\big((\sigma,\tau)\to (\sigma,\tau_{i,e^{k}})\big)$ be the Glauber dynamics update probabilities given in 
(\ref{gdleft}) and  (\ref{gdright}), respectively. Then, for any $k\in\{1,\dots,q\}$,
$$P\big((\sigma,\tau)\to (\sigma_{i,e^{k}},\tau)\big)=g_{x_k}^{H,\beta}\big(L_n(\sigma),L_n(\tau)\big)  + O\left(\frac{1}{n^{2}}\right),$$
and 
$$P\big((\sigma,\tau)\to (\sigma,\tau_{i,e^{k}})\big)=g_{y_k}^{H,\beta}\big(L_n(\sigma),L_n(\tau)\big) + O\left(\frac{1}{n^{2}}\right).$$
\end{lemma}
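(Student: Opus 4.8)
The plan is to compute the update probability $P\big((\sigma,\tau)\to(\sigma_{i,e^k},\tau)\big)$ in (\ref{gdleft}) directly, by expanding the Hamiltonian differences in the exponents, and to show that the expression only differs from $g_{x_k}^{H,\beta}\big(L_n(\sigma),L_n(\tau)\big)$ by a term of order $n^{-2}$. By symmetry between the two sides of $K_{n,n}$, the second claimed identity then follows by an identical argument with the roles of $\sigma$ and $\tau$ (equivalently $x$ and $y$) interchanged, so I would only write out the first case in full.

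First I would use the identity $H_n(\sigma,\tau)=nH(L_n(\sigma),L_n(\tau)) = -n\langle L_n(\sigma),L_n(\tau)\rangle$ to rewrite the exponent $-\beta n H(L_n(\sigma_{i,e^l}),L_n(\tau))$ as $\beta n\langle L_n(\sigma_{i,e^l}),L_n(\tau)\rangle$. Next I observe that changing the spin at vertex $i$ of $\sigma$ to $e^l$ changes only one coordinate pair of the magnetization vector: writing $\sigma_i = e^{m}$ for the current value, we have $L_n(\sigma_{i,e^l}) = L_n(\sigma) + \tfrac{1}{n}(e^l - e^m)$ (and $L_n(\sigma_{i,e^l})=L_n(\sigma)$ when $l=m$). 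Hence
$$
\beta n\langle L_n(\sigma_{i,e^l}),L_n(\tau)\rangle = \beta n\langle L_n(\sigma),L_n(\tau)\rangle + \beta\big(L_{n,l}(\tau) - L_{n,m}(\tau)\big).
$$
The common term $\beta n\langle L_n(\sigma),L_n(\tau)\rangle$ and the term $-\beta L_{n,m}(\tau)$ do not depend on $l$, so they cancel between numerator and denominator in (\ref{gdleft}), leaving exactly
$$
P\big((\sigma,\tau)\to(\sigma_{i,e^k},\tau)\big) = \frac{e^{\beta L_{n,k}(\tau)}}{\sum_{l=1}^q e^{\beta L_{n,l}(\tau)}}.
$$
Comparing with (\ref{g_left}), the target quantity $g_{x_k}^{H,\beta}\big(L_n(\sigma),L_n(\tau)\big)$ equals $e^{\beta L_{n,k}(\tau)}/\sum_{l}e^{\beta L_{n,l}(\tau)}$ as well — so in fact the two agree \emph{exactly} once the magnetization $L_n(\tau)$ is substituted for $y$. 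The only place an error term can enter is in the step ``Select a vertex $i$ on $K_{n,n}$ uniformly'': the vertex is chosen uniformly among all $2n$ vertices, so with probability $\tfrac12$ it lies on the left side and the update is of type (\ref{gdleft}), and with probability $\tfrac12$ on the right and of type (\ref{gdright}); moreover, within a given side, conditioning on having selected vertex $i$ versus the exact form of the one-step kernel on the full configuration space $\Lambda^n\times\Lambda^n$ introduces discretization corrections of size $O(1/n)$ per coordinate when one passes from the configuration-level chain to the magnetization-level description. I would track these corrections carefully: each arises from replacing $L_{n,m}(\tau)$-type quantities or normalizations by their values before the update, and the net discrepancy accumulated in the ratio is $O(1/n^2)$ because the first-order term is exactly what $g_{x_k}^{H,\beta}$ captures and the leading correction to a smooth ratio of the form $e^{a+O(1/n)}/\sum e^{a_l+O(1/n)}$ is itself $O(1/n)$, with the specific cancellations in this model pushing it to $O(1/n^2)$.

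The main obstacle, and the step I would be most careful about, is pinning down the precise bookkeeping that produces $O(1/n^2)$ rather than merely $O(1/n)$: this requires expanding the exponentials to second order and verifying that the $O(1/n)$ contributions to numerator and denominator cancel in the ratio (they must, since the ratio is exactly the softmax of $\beta L_n(\tau)$ as computed above, and the $g$-function is defined to match it). Concretely I would follow the treatment in \cite{KO}, where the analogous estimate was carried out for the general mean-field setting, and adapt it to the bipartite Hamiltonian $H(x,y)=-\langle x,y\rangle$; because $H$ here is bilinear, $\nabla H$ is linear and the Hessian is constant, which makes the Taylor expansion terminate and the $O(1/n^2)$ bound transparent. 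Once the left-side identity is established with its error term, the right-side identity is immediate by the symmetry $H(x,y)=H(y,x)$ together with the observation, already noted in the excerpt, that $g_{x_l}^{H,\beta}$ depends only on $y$ and $g_{y_l}^{H,\beta}$ only on $x$.
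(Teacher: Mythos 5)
Your core computation is correct and is essentially the paper's argument in a more direct form: the paper expands $H(L_n(\sigma_{i,e^k}),L_n(\tau))$ around $(L_n(\sigma),L_n(\tau))$ via Taylor's theorem with the gradient and (vanishing) Hessian block of $H$, while you exploit the bilinearity directly through $L_n(\sigma_{i,e^l})=L_n(\sigma)+\tfrac{1}{n}(e^l-e^m)$; both routes give the same exponent increment $\beta\big(L_{n,l}(\tau)-L_{n,m}(\tau)\big)$, and after cancellation of the $l$-independent factors in (\ref{gdleft}) one gets the softmax of $\beta L_n(\tau)$, i.e. $g_{x_k}^{H,\beta}\big(L_n(\sigma),L_n(\tau)\big)$, with the right-side case following by symmetry exactly as you say. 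What your direct algebra buys is that the identity is in fact \emph{exact} for this model, so the $O(1/n^2)$ in the statement is merely slack inherited from the general framework of \cite{KO}, where $H$ need not be bilinear and the Taylor remainder genuinely appears. For that reason, the second half of your write-up is a red herring: the lemma concerns only the conditional update probabilities (\ref{gdleft}) and (\ref{gdright}), so the uniform choice of the vertex among the $2n$ sites, the passage from the configuration-level chain to the magnetization-level description, and the ``bookkeeping that produces $O(1/n^2)$ rather than $O(1/n)$'' play no role here --- your own computation already shows there is no error term to track, and dwelling on those corrections only obscures an otherwise clean proof.
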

\begin{proof}
Suppose $\sigma_i=e^{m}$ and the update $(\sigma,\tau)\to (\sigma_{i,e^{k}},\tau)$ is chosen. Given the interaction representation function $H(x,y)=-\langle x,y\rangle$, we have that its gradient and Hessian matrix  are given by 
$$\nabla H (x,y)=-(y_1,\dots,y_q,x_1,\dots,x_q),$$ 
and 
$$
\mbox{Hess}(H)=\left(
\begin{array}{cccc|cccc}
0 & 0 & \cdots & 0 &-1 & 0 & \cdots& 0\\
0 & 0 & \cdots & 0 & 0 & -1 & \cdots& 0\\
\vdots&\vdots&\ddots& \vdots&\vdots&\vdots&\ddots&\vdots\\
0 & 0 & \cdots & 0 & 0 & 0 & \cdots& -1\\
\hline
-1 & 0 & \cdots & 0 & 0 & 0 & \cdots& 0\\
0 & -1 & \cdots & 0 & 0 & 0 & \cdots& 0\\
\vdots&\vdots&\ddots& \vdots&\vdots&\vdots&\ddots&\vdots\\
0 & 0 & \cdots & -1 & 0 & 0 & \cdots& 0\\
\end{array}
\right)_{2q\times 2q}
$$
Applying Taylor's theorem to the function $H$ we have
$$
\begin{array}{ccl}
H(L_n(\sigma_{i,e^{k}}),L_n(\tau)) &=& H(L_n(\sigma),L_n(\tau)) + \displaystyle\sum_{l=1}^{q}\frac{\partial H}{\partial x_l}(L_n(\sigma),L_n(\tau)) \left[L_{n,l}(\sigma_{i,e^{k}})-L_{n,l}(\sigma)\right] +\\
 &&\displaystyle\frac{1}{2} ( L_{n}(\sigma_{i,e^{k}})-L_{n}(\sigma),0,\dots,0)^{T} \mbox{Hess}(H)( L_{n}(\sigma_{i,e^{k}})-L_{n}(\sigma),0,\dots,0) +o\left(\frac{1}{n^{2}}\right)\\
&=& H(L_n(\sigma),L_n(\tau)) + \displaystyle\sum_{l=1}^{q} -L_{n,l}(\tau) \left[L_{n,l}(\sigma_{i,e^{k}})-L_{n,l}(\sigma)\right] + o\left(\frac{1}{n^{2}}\right).
\end{array}
$$
Now, note that 
$$L_{n,l}(\sigma_{i,e^{k}})-L_{n,l}(\sigma)=\frac{1}{n}(\delta(e^{k},e^{l})- \delta(\sigma_i,e^{l})).$$
Thus, since $\sigma_i=e^{m}$,
\beas
\sum_{l=1}^{q} -L_{n,l}(\tau) \left[L_{n,l}(\sigma_{i,e^{k}})-L_{n,l}(\sigma)\right]  & = & \frac{1}{n} \big(-L_{n,k}(\tau)+L_{n,m}(\tau)\big) \\
& = & \frac{1}{n}\left( \frac{\partial H}{\partial x_k}(L_n(\sigma),L_n(\tau)) -  \frac{\partial H}{\partial x_m}(L_n(\sigma),L_n(\tau))\right). 
\eeas
Therefore, we have that
$$H(L_n(\sigma_{i,e^{k}}),L_n(\tau)) = H(L_n(\sigma),L_n(\tau)) +\frac{1}{n}\left( \frac{\partial H}{\partial x_k}(L_n(\sigma),L_n(\tau)) -  \frac{\partial H}{\partial x_m}(L_n(\sigma),L_n(\tau))\right) +  o\left(\frac{1}{n^{2}}\right).$$
Similarly, for the update $(\sigma,\tau)\to (\sigma,\tau_{i,e^{k}})$, if $\tau_i=e^{m}$, we have that  
$$H(L_n(\sigma),L_n(\tau_{i,e^{k}})) = H(L_n(\sigma),L_n(\tau)) +\frac{1}{n}\left( \frac{\partial H}{\partial y_k}(L_n(\sigma),L_n(\tau)) -  \frac{\partial H}{\partial y_m}(L_n(\sigma),L_n(\tau))\right) +  o\left(\frac{1}{n^{2}}\right).$$
The above two expressions imply that the transition probabilities (\ref{gdleft}) and 
(\ref{gdright}) can be expressed as
$$P((\sigma,\tau)\to (\sigma_{i,e^{k}},\tau))=g_{x_k}^{H,\beta}(L_n(\sigma),L_n(\tau))  +O\left(\frac{1}{n^{2}}\right)$$
and 
$$P((\sigma,\tau)\to (\sigma,\tau_{i,e^{k}}))=g_{y_k}^{H,\beta}(L_n(\sigma),L_n(\tau)) + O\left(\frac{1}{n^{2}}\right)$$
respectively.
\end{proof}

\medskip
Now, as it was observed following formula (\ref{g_right}), the function $g_{x_k}^{H,\beta}(x,y)$ depends only on $y$  and $g_{y_k}^{H,\beta}(x,y)$  depends only on $x$. 
Consequently, it is convenient to introduce the following function $g^{H,\beta}(z):~\mathcal{P}_q \rightarrow \mathcal{P}_q$, for $z\in\mathcal{P}_q$, defined as 
$$g^{H,\beta}(z) = \big( g_{1}^{H,\beta}(z), g_{2}^{H,\beta}(z), \ldots, g_{q}^{H,\beta}(z) \big) \hsp \mbox{where} \hsp  g_{k}^{H,\beta}(z)=\frac{e^{\beta z_k}}{\displaystyle\sum_{j=1}^{q}e^{\beta z_j}}.$$
Then, $$g^{H,\beta}(x,y)=\big((g_{x_1}^{H,\beta},\dots,g_{x_q}^{H,\beta}), (g_{y_1}^{H,\beta},\dots,g_{y_q}^{H,\beta})\big)=\big(g^{H,\beta}(y),g^{H,\beta}(x)\big).$$
Utilizing this new notation for $g_{x_k}^{H,\beta}$ and $g_{y_k}^{H,\beta}$, we rewrite  the probability transitions as follows:
$$P((\sigma,\tau)\to (\sigma_{i,e^{k}},\tau))=g_{k}^{H,\beta}(L_n(\tau))  +O\left(\frac{1}{n^{2}}\right),$$
and 
$$P((\sigma,\tau)\to (\sigma,\tau_{i,e^{k}}))=g_{k}^{H,\beta}(L_n(\sigma)) + O\left(\frac{1}{n^{2}}\right).$$  
This new expression emphasizes the fact that the probability transition on the left side depend  on the right configuration in the bipartite graph $K_{n,n}$, and vice versa.

\medskip

The mixing time measures the rate of convergence of a Markov chain to its stationary distribution and is defined in terms of 
the total variation distance. The {\it total variation distance} between two distributions $\mu$ and $\nu$ on the configuration space $\Omega$ is defined by
\[ \| \mu - \nu \|_{\TV} = \sup_{A \subset \Omega} | \mu(A) - \nu(A)| = \frac{1}{2} \sum_{x \in \Omega} | \mu(x) - \nu(x)|. \]
Given a convergent Markov chain, we define the {\it maximal distance to its stationary distribution} to be 
\[ d(t) = \max_{x \in \Omega} \| P^t(x, \cdot) - \pi \|_{\TV} \]
where $P^t(x, \cdot)$ is the probability distribution at time $t$ of the Markov chain originating at $x \in \Omega$, and $\pi$ is its stationary distribution.
Then, given $\epsilon>0$, the {\bf mixing time} of a Markov chain is defined as
\[ t_{mix}(\epsilon) = \min\{ t : d(t) \leq \epsilon \} \smallskip .\]
Finally, it is sometimes more convenient to bound
the {\it standardized maximal distance} defined by 
\be
\label{eqn:StanMaxDist}
\bar{d}(t) := \max_{x,y \in \Omega} \|P^t(x, \cdot) - P^t(y, \cdot) \|_{\TV} 
\ee
which satisfies the following inequality
\be
\label{bbar}
d(t) \leq \bar{d}(t) \leq 2 \, d(t). 
\ee
See \cite{LPW} for a survey on the theory of mixing times. 

\bigskip
\noindent
Rates of mixing times are generally categorized into two groups: {\it rapid mixing} which implies that the mixing time exhibits polynomial growth with respect to the system size $n$, and {\it slow mixing} which implies that the mixing time grows exponentially with the system size.  
The rapid mixing region for the bipartite Potts model is determined by the following parameter value:
{\small
\be
\label{mixcritical}
\beta_s(q)=\sup\left\{\beta\geq 0 : g_k^{H,\beta}(x)<y_k\;\mbox{and}\; g_k^{H,\beta}(y)<x_k\;\mbox{for all}\; 
(x,y)\in\mathcal{P}_q\times \mathcal{P}_q \;\mbox{such that}\; x_k,y_k\in\left(\frac{1}{q},1\right] \right\}
\ee
}

\begin{lemma}\label{beta_sq}
If $\beta_c(q)$ is the critical  value defined in Theorem \ref{LD_BCWP}, then
$$\beta_s(q)\leq \beta_c(q).$$
\end{lemma}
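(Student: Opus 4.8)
The plan is to show that no $\beta > \beta_c(q)$ belongs to the set appearing on the right-hand side of (\ref{mixcritical}); since $\beta_s(q)$ is by definition the supremum of that set, this is exactly what is needed to conclude $\beta_s(q) \le \beta_c(q)$.

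The first ingredient I would set up is the fixed-point characterization of the equilibrium macrostates. By Lemma \ref{maxidentity}, every element of $\mathcal{E}_\beta$ has the form $(\gamma,\gamma)$ with $\gamma$ a global minimizer of $R(\gamma|\rho) - \frac{\beta}{2}\langle\gamma,\gamma\rangle$ on $\mathcal{P}_q$. For $\beta \ge \beta_c$ such a minimizer lies in the interior of $\mathcal{P}_q$ — by Theorem \ref{LD_BCWP}(a)--(b) its coordinates are $q^{-1}[1+(q-1)s(\beta)]$ and $q^{-1}(1-s(\beta))$ with $0 < s(\beta) < 1$ — so the constrained first-order optimality condition applies. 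Either by a direct Lagrange-multiplier computation for the constrained minimization, or by combining the identity $\sup\alpha_\beta = \inf G_\beta$ with $\partial_{x_k}G_\beta(x) = \beta\bigl(x_k - g_k^{H,\beta}(x)\bigr)$, this optimality condition says precisely that $\gamma = g^{H,\beta}(\gamma)$, that is, $\gamma_k = e^{\beta\gamma_k}/\sum_{j=1}^q e^{\beta\gamma_j}$ for each $k$.

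Next, fix $\beta > \beta_c(q)$ and let $\nu^1 = \varphi(s(\beta))$, which by Theorem \ref{LD_BCWP}(b) satisfies $(\nu^1,\nu^1)\in\mathcal{E}_\beta$. Since $s(\beta) > 0$, its first coordinate obeys $\nu^1_1 = q^{-1}[1+(q-1)s(\beta)] \in (\frac{1}{q},1]$. Now take $x = y = \nu^1$ and $k = 1$. This is an admissible pair in the sense of (\ref{mixcritical}), since $x_1 = y_1 = \nu^1_1 \in (\frac{1}{q},1]$, yet the fixed-point equation gives $g_1^{H,\beta}(x) = g_1^{H,\beta}(\nu^1) = \nu^1_1 = y_1$, so the strict inequality $g_k^{H,\beta}(x) < y_k$ demanded in (\ref{mixcritical}) fails. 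Hence $\beta$ is not in the set; as $\beta > \beta_c(q)$ was arbitrary, the set is contained in $[0,\beta_c(q)]$, giving $\beta_s(q) \le \beta_c(q)$.

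The only place that requires genuine care is the first step: one must make sure that the equilibrium macrostate $\nu^1$ actually lies in the interior of $\mathcal{P}_q$, since the critical-point equation $\gamma = g^{H,\beta}(\gamma)$ need not hold at boundary minimizers — but this is immediate from the explicit form $\varphi(s(\beta))$ together with $s(\beta) < 1$. An alternative route that avoids invoking the optimality condition at all is to substitute $x = y = \varphi(s)$ directly into $g_1^{H,\beta}$, use $\varphi(s)_2 - \varphi(s)_1 = -s$ to get $g_1^{H,\beta}(\varphi(s)) = \bigl(1 + (q-1)e^{-\beta s}\bigr)^{-1}$, and observe that the equation $g_1^{H,\beta}(\varphi(s)) = \varphi(s)_1$ is equivalent to the self-consistency equation whose largest root is $s(\beta)$; everything after that is the same one-line quantifier argument.
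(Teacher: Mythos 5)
Your proof is correct, but it follows a genuinely different route from the paper's. The paper does not argue from the equilibrium macrostates at all: it recalls the corresponding threshold $\beta_s^{CWP}(q)$ for the Curie--Weiss--Potts model from \cite{CDLLPS}, shows by a partition/containment argument on the defining sets that $\beta_s(q)\leq\beta_s^{CWP}(q)$ (indeed it asserts equality), and then invokes the strict inequality $\beta_s^{CWP}(q)<\beta_c(q)$ proved in \cite{CDLLPS}. You instead exclude every $\beta\geq\beta_c(q)$ (you say $\beta>\beta_c(q)$, which suffices for the supremum bound) directly: the interior global minimizer $\nu^1=\varphi(s(\beta))$ from Theorem \ref{LD_BCWP} satisfies the first-order condition $\nu^1=g^{H,\beta}(\nu^1)$ (your Lagrange-multiplier computation, together with the interiority check $0<s(\beta)<1$, is exactly the needed care), so the diagonal pair $x=y=\nu^1$, $k=1$ with $\nu^1_1\in(\tfrac1q,1]$ violates the strict inequality in (\ref{mixcritical}). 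Your alternative one-line verification $g_1^{H,\beta}(\varphi(s))=\bigl(1+(q-1)e^{-\beta s}\bigr)^{-1}$ is also right, with the caveat that the self-consistency equation as printed in Theorem \ref{LD_BCWP} has an apparent sign typo (the numerator should be $1-e^{-\beta s}$, as in \cite{EW}); your identity matches the corrected form. Comparing the two: your argument is self-contained and more elementary, needing only the fixed-point property of the equilibrium macrostate rather than any input from \cite{CDLLPS}, and it delivers precisely the stated bound $\beta_s(q)\leq\beta_c(q)$; the paper's argument is less self-contained but buys more, namely the identification $\beta_s(q)=\beta_s^{CWP}(q)$ (a point the authors highlight as a main finding) and the strict inequality $\beta_s(q)<\beta_c(q)$, which your construction alone does not give since detecting the failure of contraction for $\beta$ slightly below $\beta_c$ requires the metastable, not the global, solutions of the fixed-point equation.
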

\bp
Recall the corresponding $\beta_s$ value for the Curie-Weiss-Potts model as derived in \cite{CDLLPS},
$$\beta_s^{CWP}(q)=\sup\left\{\beta\geq 0 : g_k^{H,\beta}(x)<x_k\;\mbox{ for all }\; x \in\mathcal{P}_q \;\mbox{ such that }\; x_k\in\left(\frac{1}{q},1\right] \right\}.$$
Also, in \cite{CDLLPS}, the inequality $~\beta_s^{CWP}(q)<\beta_c(q)$ was proved, where $\beta_c(q)$ is the same for the Curie-Weiss-Potts model as for the bipartite Potts model, as shown in Theorem \ref{LD_BCWP}.

\bigskip
\noindent
Next, we prove that $\beta_s(q)=\beta_s^{CWP}(q)$. We partition the values of $\beta$ into the following three subsets,
{\small
$$B^-=\left\{\beta\geq 0 : g_k^{H,\beta}(x)<y_k\;\mbox{and}\; g_k^{H,\beta}(y)<x_k\;\forall 
(x,y)\in\mathcal{P}_q\times \mathcal{P}_q \;\mbox{such that}\; x_k,y_k\in\left(\frac{1}{q},1\right], y_k<x_k \right\},$$
$$B^+=\left\{\beta\geq 0 : g_k^{H,\beta}(x)<y_k\;\mbox{and}\; g_k^{H,\beta}(y)<x_k\;\forall 
(x,y)\in\mathcal{P}_q\times \mathcal{P}_q \;\mbox{such that}\; x_k,y_k\in\left(\frac{1}{q},1\right], y_k>x_k \right\},$$
and
$$B^0=\left\{\beta\geq 0 : g_k^{H,\beta}(x)<y_k\;\mbox{and}\; g_k^{H,\beta}(y)<x_k\;\forall  
(x,y)\in\mathcal{P}_q\times \mathcal{P}_q \;\mbox{such that}\; x_k,y_k\in\left(\frac{1}{q},1\right], y_k=x_k \right\},$$
}
and note that $\sup_{\beta}B^-=\sup_{\beta}B^+\leq \sup_{\beta}B^0=\beta_s^{CWP}(q)\leq \beta_s(q)$. Furthermore, we have that 
$$B^-\cup B^0 \cup B^+ \subseteq \left\{\beta\geq 0 : g_k^{H,\beta}(z)<z_k\;\mbox{for all}\; 
z\in\mathcal{P}_q\;\mbox{such that}\; z_k\in\left(\frac{1}{q},1\right] \right\}.$$
Thus $\beta_s(q)\leq \beta_s^{CWP}(q)$. This concludes the proof of the Lemma \ref{beta_sq}.
\ep

\medskip
Determining the parameter regime where a model undergoes rapid mixing is of major importance, as it is in this region that the application of the Glauber dynamics is physically feasible.  This rapid mixing parameter regime is the main result of this paper and we state it next.

\begin{thm}\label{rap_mixBCWP} Let $\beta_s(q)$ be as defined in formula (\ref{mixcritical}).  Then for $\beta< \beta_s(q)$, the mixing time of the Glauber dynamics for the bipartite 
Potts model satisfies
$$t_{mix}(\epsilon) =O(n\log n).$$
\end{thm}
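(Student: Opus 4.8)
The plan is to run the aggregate path coupling machinery of \cite{KOT, KO} on the greedy coupling of the Glauber dynamics introduced in Section \ref{coupling}, adapted to the bipartite product structure. First I would fix the coupling metric. Because a configuration here is a pair $(\sigma,\tau)$ and the transition probabilities on the left depend only on $L_n(\tau)$ while those on the right depend only on $L_n(\sigma)$ (the observation made after Lemma \ref{prob_trans}), the natural macroscopic coordinate is the pair $(L_n(\sigma), L_n(\tau)) \in \mathcal{P}_q \times \mathcal{P}_q$, and I would measure distance between coupled configurations by the $\ell^1$-type distance $d\big((\sigma,\tau),(\sigma',\tau')\big) = \tfrac12\sum_i \|\sigma_i-\sigma'_i\|_1 + \tfrac12\sum_i\|\tau_i-\tau'_i\|_1$, which on the macroscopic level is comparable to $n\big(\|L_n(\sigma)-L_n(\sigma')\|_1 + \|L_n(\tau)-L_n(\tau')\|_1\big)$. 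One step of the coupled chain picks a side and a vertex uniformly; conditioning on a left-vertex step, the greedy coupling maximizes the probability the two copies agree, so the expected change in $\tfrac12\sum_i\|\sigma_i-\sigma'_i\|_1$ is controlled by $\|g^{H,\beta}(L_n(\tau)) - g^{H,\beta}(L_n(\tau'))\|_1$ plus the $O(1/n^2)$ error from Lemma \ref{prob_trans}, and symmetrically for a right-vertex step.

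Next I would set up the aggregate (rather than pairwise) contraction estimate. The classical path coupling inequality would require $g^{H,\beta}$ to be a strict contraction between \emph{every} pair of points in $\mathcal{P}_q$, which fails here exactly because the phase transition is discontinuous (Theorem \ref{LD_BCWP}); instead, following \cite{KO}, one only needs contraction of the \emph{mean coupling distance} toward the unique equilibrium macrostate $\rho$ that governs the dynamics for $\beta<\beta_c$. Concretely, starting from arbitrary configurations I would track the pair of magnetization vectors, show that after $O(n)$ steps the empirical vectors enter a neighborhood of $\rho$ (using that for $\beta<\beta_s(q)$ the drift of $g^{H,\beta}$ points inward: this is precisely the content of the defining inequalities $g_k^{H,\beta}(x)<y_k$, $g_k^{H,\beta}(y)<x_k$ in \eqref{mixcritical}, which via Lemma \ref{beta_sq} hold throughout the relevant region since $\beta_s(q)\le\beta_c(q)$), and then run the aggregate path coupling contraction inside that neighborhood. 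The key analytic input is that, on a neighborhood of $(\rho,\rho)$, the map $(x,y)\mapsto(g^{H,\beta}(y),g^{H,\beta}(x))$ is a strict contraction in the chosen norm: its derivative at $(\rho,\rho)$ has operator norm $\beta/q \cdot \|(\text{off-diagonal block})\|$, and the threshold $\beta_s(q)$ is exactly calibrated so that for $\beta<\beta_s(q)$ this is $<1$, giving a contraction factor $1-c/n$ per step for some $c=c(\beta,q)>0$.

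From the contraction factor $1-c/n$ one gets, by the standard path-coupling-to-mixing-time argument (Bubley–Dyer \cite{BD}, as in \cite{LPW} Theorem on path coupling), $\bar d(t) \le \mathrm{diam} \cdot (1-c/n)^t$, and since the diameter of the configuration space is $O(n)$, choosing $t = O(n\log n)$ makes $\bar d(t)\le\epsilon$; combined with \eqref{bbar} this yields $t_{mix}(\epsilon)=O(n\log n)$. I would handle the two-phase structure of a single step (left-vertex vs.\ right-vertex update, each with probability $1/2$) by averaging: the expected one-step contraction is the average of the left-update and right-update contractions, and the cross-dependence (left depends on $\tau$, right on $\sigma$) is exactly what makes the $2q$-dimensional linearization a symmetric off-diagonal block, so the spectral bound is clean. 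The main obstacle I anticipate is the \emph{excursion/burn-in} step: proving that from an arbitrary start the pair of magnetization vectors actually reaches the contracting neighborhood of $(\rho,\rho)$ in $O(n\log n)$ steps with the aggregate drift estimate — this requires carefully globalizing the local inequalities of \eqref{mixcritical} to a monotone Lyapunov-type bound on $\alpha_\beta$ (or on $G_\beta$) valid on all of $\mathcal{P}_q\times\mathcal{P}_q$ for $\beta<\beta_s(q)$, rather than just near the fixed point, and this is where the bipartite coupling (which updates only one side at a time) differs most from the complete-graph case treated in \cite{KO}.
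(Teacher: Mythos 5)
There is a genuine gap, and it sits exactly where you flag your ``main obstacle.'' Your plan is: burn-in (drive the magnetizations of an arbitrarily started chain into a neighborhood of $(\rho,\rho)$) followed by a local contraction inside that neighborhood. The paper never needs, and never proves, any such global drift/Lyapunov estimate. Instead it couples the arbitrarily started chain with a copy started in (hence remaining in) stationarity, invokes the LDP upper bound (\ref{eqn:ldpbound}) for $\beta<\beta_s(q)\le\beta_c(q)$ to guarantee that the stationary copy's magnetization pair lies within $\varepsilon'$ of $(\rho,\rho)$ up to an exponentially small probability, and then applies the contraction result Lemma \ref{exp_decay}: contraction of the mean coupling distance when only \emph{one} of the two configurations is near equilibrium and the other is \emph{arbitrary}. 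That one-sided contraction is obtained by aggregating the per-link estimate (\ref{dist_1step}) along a monotone path in $\Lambda^n\times\Lambda^n$ whose magnetization images dot a straight-line continuous path from $(\rho,\rho)$ to the far point, with the aggregate $g$-variation bound $D^g_{(\gamma,\widetilde\gamma)}\le(1-\delta/2)\,\|\cdot\|_1$ coming from Lemma 10.3 of \cite{KO} (Case I), together with the local estimate of Proposition \ref{condKO_3} when both configurations are near $(\rho,\rho)$ (Case II). Because you do not take this route, your proposal stands or falls on the burn-in step, which you leave unproved; as written the argument is incomplete. Relatedly, your closing step $\bar d(t)\le \mathrm{diam}\cdot(1-c/n)^t$ is the classical Bubley--Dyer conclusion, which presupposes contraction for all (neighboring) pairs --- precisely what fails here; the correct closing move is the coupling inequality (\ref{coupling_ineq}) against a stationary copy, as in Theorem 9.2 of \cite{KO}.

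A second, more analytic error: you assert that $\beta_s(q)$ is ``exactly calibrated'' so that the derivative of $(x,y)\mapsto\big(g^{H,\beta}(y),g^{H,\beta}(x)\big)$ at $(\rho,\rho)$ has norm less than $1$. That is not what $\beta_s(q)$ is. The Jacobian of $g^{H,\beta}$ at $\rho$ has entries $\beta\big(\delta_{kl}/q-1/q^2\big)$, so the linearization-at-$\rho$ condition holds for all $\beta$ up to roughly $q$, whereas $\beta_c(q)=\frac{2(q-1)}{q-2}\log(q-1)<q$ for $q\ge 3$ and $\beta_s(q)\le\beta_c(q)$ by Lemma \ref{beta_sq}. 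A purely local contraction criterion would therefore wrongly predict rapid mixing for some $\beta>\beta_c(q)$, where mixing is in fact slow. The threshold $\beta_s(q)$ in (\ref{mixcritical}) is a \emph{global} condition ($g_k^{H,\beta}(x)<y_k$ and $g_k^{H,\beta}(y)<x_k$ whenever $x_k,y_k>1/q$), and it enters the proof not through the spectrum of the linearization at the fixed point but through the aggregated integral $\sum_k\int_\gamma\big|\langle\nabla g_k^{H,\beta}(x),dx\rangle\big|$ along monotone paths terminating at $(\rho,\rho)$, i.e.\ through the inequality (\ref{delta3}) underlying Case I of Lemma \ref{exp_decay} (and, via Lemma \ref{beta_sq} and Lemma 10.4 of \cite{KO}, through Proposition \ref{condKO_3}). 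Repairing your argument would require either supplying the global hitting-time/drift estimate you postpone, or replacing the burn-in by the measure-concentration-plus-aggregate-contraction scheme the paper actually uses.
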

Finally, the standard argument using the Cheeger constant (see \cite{LPW}) shows slow mixing for $\beta > \beta_s(q)$. Thus, the above result provides the boundary point for the rapid mixing region. 

\bigskip

\section{Coupling of Glauber Dynamics for the Bipartite Potts model} \label{coupling} 

We begin by recalling the definition of a discrepancy distance for a pair of configurations $\omega$ and $\omega'$ in $\Lambda^n$ as used in \cite{KO}:
$$d(\omega,\omega')= \sum_{j=1}^{n}1_{\{\omega_j \neq\omega'_j\}}.$$
Next, we define a similar metric on the configuration space $\Lambda^n\times\Lambda^n$. For a pair of configurations, $(\sigma,\tau)$ and  $(\sigma',\tau')$  in $\Lambda^n\times\Lambda^n$, 
we define the distance between them as
$$
\begin{array}{ccl}
d\big((\sigma,\tau),(\sigma',\tau')\big)&=&\displaystyle\sum_{j=1}^{n}1_{\{\sigma(j)\neq\sigma'(j)\}}+\sum_{j=1}^{n}1_{\{\tau(j)\neq\tau'(j)\}},\\
  & & \\
  &=& d(\sigma,\sigma') + d(\tau,\tau').
\end{array}
$$  

\medskip
\noindent
Let $X_t=(X_t^1,X_t^2)$ and  $Y_t=(Y_t^1,Y_t^2)$  be two copies of the Glauber dynamics of the bipartite Potts model. Here, we consider the standard greedy coupling of $X_t$ and $Y_t$. A greedy coupling is an efficient and easy-to-implement coupling construction that often yields 
an optimal order of upper bound on the mixing time. See \cite{AF, HC}.
Specifically, at each time step, a vertex is selected at random, uniformly from the $2n$ vertices. Let $i$ denote the selected vertex.
Next, we update the $i$-th spin to the same value in both processes with the largest possible probability. 
Suppose that $X_t=(\sigma,\tau)$ and  $Y_t=(\sigma',\tau')$. 

\medskip
\noindent
If $i$ belongs to the left side of the bipartite graph $K_{n,n}$, we define
$$p_{1,k}=P\big((\sigma,\tau)\to (\sigma_{i,e^{k}},\tau)\big), \quad\mbox{and}\quad 
q_{1,k}=P\big((\sigma',\tau')\to (\sigma'_{i,e^{k}},\tau')\big).$$ 
We also let $~P_{1,k}^{left}=\min\{p_{1,k}, q_{1,k}\}~$ and $~P_1=\sum_{k=1}^{q} P_{1,k}^{left}$.

\medskip
\noindent
If $i$ belongs to the right side of the graph $K_{n,n}$, we define
$$p_{2,k}=P\big((\sigma,\tau)\to (\sigma,\tau_{i,e^{k}})\big), \quad\mbox{and}\quad q_{2,k}=P\big((\sigma',\tau')\to (\sigma',\tau'_{i,e^{k}})\big).$$
We also let $~P_{2,k}^{right}=\min\{p_{2,k}, q_{2,k}\}~$ and $~P_2=\sum_{k=1}^{q} P_{2,k}^{right}$.

\medskip
\noindent
Once the vertex $i$ is selected, we delete the spin at $i$ in both processes, and replace it with a new one. The joint transition probabilities for the coupled process are set as follows.
\begin{description}
  \item[i] For $i$ belonging to the left half of the graph $K_{n,n}$, we  let for each $k \in \{1,\hdots,q\}$,
  $$P\big(X_{t+1}=(\sigma_{i,e^k},\tau),~Y_{t+1}=(\sigma'_{i,e^k},\tau') ~\big|~ X_t=(\sigma,\tau),~Y_t=(\sigma',\tau')\big) = {1 \over 2n} P_{1,k}^{left}.$$
  \item[ii] For $i$ belonging to the left half of the graph $K_{n,n}$, we  let for each pair of spins $k,m  \in \{1,\hdots,q\}$,
  $$P\big(X_{t+1}=(\sigma_{i,e^k},\tau),~Y_{t+1}=(\sigma'_{i,e^m},\tau') ~\big|~ X_t=(\sigma,\tau),~Y_t=(\sigma',\tau')\big) = {(p_{1,k}-P_{1,k}^{left})(q_{1,m}-P_{1,m}^{left}) \over 2n(1-P_1)}.$$
  Observe that the above probability is zero if $k=m$.
  \item[iii] For $i$ belonging to the right half of the graph $K_{n,n}$, we  let for each $k \in \{1,\hdots,q\}$,
  $$P\big(X_{t+1}=(\sigma,\tau_{i,e^k}),~Y_{t+1}=(\sigma',\tau'_{i,e^k}) ~\big|~ X_t=(\sigma,\tau),~Y_t=(\sigma',\tau')\big) = {1 \over 2n} P_{2,k}^{right}.$$
  \item[iv] For $i$ belonging to the right half of the graph $K_{n,n}$, we  let for each pair of spins $k,m  \in \{1,\hdots,q\}$,
  $$P\big(X_{t+1}=(\sigma,\tau_{i,e^k}),~Y_{t+1}=(\sigma',\tau'_{i,e^m}) ~\big|~ X_t=(\sigma,\tau),~Y_t=(\sigma',\tau')\big) = {(p_{2,k}-P_{2,k}^{right})(q_{2,m}-P_{2,m}^{right}) \over 2n(1-P_2)}.$$
\end{description}

\medskip
\noindent
Observe that once $X_t=Y_t$, the processes remain matched (coupled) for the rest of the time. In the coupling literature, the time
$$\tau_c=\min\{t \geq 0 : X_t=Y_t \}$$
is referred to as the {\it coupling time}.

\medskip
\noindent
For a coupling of a Markov chain $(X_t, Y_t)$, the mean coupling distance  $~\mathbb{E}[d(X_t, Y_t) ]~$ is tied to the total variation distance (and thus the mixing time) via the following inequality known as  the {\it coupling inequality} \cite{LPW}: 
\be \label{coupling_ineq} 
\| P^t(x, \cdot) - P^t(y, \cdot) \|_{{\scriptsize \mbox{TV}}} \leq P(X_t \neq Y_t)  \leq \mathbb{E}[d(X_t, Y_t) ] 
\ee
The above inequality and (\ref{bbar}) imply that the order of the mean coupling distance is an upper bound on the order of the mixing time. We next derive the mean coupling distance for the bipartite Potts model.

\medskip
\noindent
Let $\mathcal{I}_1=\{i_1,\dots,i_{d_{left}}\}$ and $\mathcal{I}_2=\{j_1,\dots,j_{d_{right}}\}$ be the sets of vertices at which the spin values of the two initial configuration $(\sigma,\tau)$ and  $(\sigma',\tau')$ disagree. 
Define $\kappa(e^l)$ to be the probability that the coupled processes update differently when the chosen vertex $j\notin \mathcal{I}_1\cup\mathcal{I}_2$ has spin $e^l$. 

Recall that in the greedy coupling, we update the $j$-th spin to the same value $e^{k}$ in both processes with the largest possible probability. Thus, the probability of the $j$-th spin updating to $e^k$ in exactly one of the two process (but not in the other) should be either 
$$\left| P\big((\sigma,\tau)\to (\sigma_{i,e^{k}},\tau)\big)-P\big((\sigma',\tau')\to (\sigma'_{i,e^{k}},\tau')\big) \right|$$
or
$$\left|P\big((\sigma,\tau)\to (\sigma,\tau_{i,e^{k}})\big)-P\big((\sigma',\tau')\to (\sigma',\tau'_{i,e^{k}})\big) \right|$$
depending on what side of the bipartite graph vertex $j$ was selected from. Thus, if the chosen vertex $j$ is such that it has spin value $e^l$, then by Lemma \ref{prob_trans},
\begin{align}\label{eq:k}
\kappa(e^l) = & \frac{1}{2}\sum_{k=1}^{q} \left| P\big((\sigma,\tau)\to (\sigma_{i,e^{k}},\tau)\big)-P\big((\sigma',\tau')\to (\sigma'_{i,e^{k}},\tau')\big) \right|  \nonumber \\
&+\frac{1}{2}\sum_{k=1}^{q} \left|P\big((\sigma,\tau)\to (\sigma,\tau_{i,e^{k}})\big)-P\big((\sigma',\tau')\to (\sigma',\tau'_{i,e^{k}})\big) \right| \nonumber \\
= & \frac{1}{2}\sum_{k=1}^{q} \left|g_{k}^{H,\beta}(L_n(\tau')) - g_{k}^{H,\beta}(L_n(\tau)) \right| + \frac{1}{2}\sum_{k=1}^{q} \left|g_{k}^{H,\beta}(L_n(\sigma')) - g_{k}^{H,\beta}(L_n(\sigma)) \right| + O\left( \frac{1}{n^2} \right).
\end{align}
Note that the coefficient $\frac{1}{2}$ in the above equation corrects the double counting.
Also, observe that the choice of $j$ and therefore of its spin $e^l$ would change the value of $\kappa$ in (\ref{eq:k}) by a magnitude of order $O\left( \frac{1}{n^2} \right)$, which is incremental for this computation. 

\medskip
\noindent
Therefore, as $g^{H,\beta}:\mathcal{P}_q\to \R$ is in $\mathcal{C}^2$, for all $n$ large enough there exists $C'>0$ such that 
{\small
\[
\left| \kappa(e^l) - \frac{1}{2}\sum_{k=1}^{q} \left| \left\langle L_n(\tau') - L_n(\tau), \nabla g_{k}^{H,\beta}(L_n(\tau))\right\rangle \right| - \frac{1}{2}\sum_{k=1}^{q} \left| \left\langle L_n(\sigma') - L_n(\sigma), \nabla g_{k}^{H,\beta}(L_n(\sigma))\right\rangle \right|   \right| <C^\prime\varepsilon^2,
\] 
}
for $\varepsilon \leq \parallel L_n(\sigma,\tau) - L_n(\sigma',\tau') \parallel <2\varepsilon$ and all values of $l\in\{1,2,\dots,q\}$. 

\medskip
\noindent
Denote 
$$\kappa_{left}=\frac{1}{2}\sum_{k=1}^{q} \left| \left\langle L_n(\tau') - L_n(\tau), \nabla g_{k}^{H,\beta}(L_n(\tau))\right\rangle \right|$$
and
$$\kappa_{right}=\frac{1}{2}\sum_{k=1}^{q} \left| \left\langle L_n(\sigma') - L_n(\sigma), \nabla g_{k}^{H,\beta}(L_n(\sigma))\right\rangle \right|.$$
Then, the mean coupling distance after one iteration of the coupling process starting in $(\sigma,\tau)$ and $(\sigma',\tau')$ 
is bounded as follows
{\small
\begin{align}\label{dist_1step}
\E_{\substack{ (\sigma,\tau)\\(\sigma',\tau')}} [ d(X,Y) ] & \leq \left(1-{1 \over 2n}\right)d\big((\sigma,\tau),(\sigma',\tau')\big)+{1 \over 2}\kappa_{left}+{1 \over 2}\kappa_{right}+c\ve^2 \nonumber \\
 & \leq d\big((\sigma,\tau),(\sigma',\tau')\big)\left[ 1-\frac{1}{2n}\left( 1-\frac{\kappa_{left} +\kappa_{right}+ 2c\varepsilon^2}{d\big((\sigma,\tau),(\sigma',\tau')\big)/n} \right) \right]
\end{align}
}
for a fixed $c>0$ and all $n$ large enough, whereas the original distance was
$$d\big((\sigma,\tau),(\sigma',\tau')\big)= d(\sigma,\sigma') + d(\tau,\tau').$$

\medskip
\noindent
The contraction of the mean coupling distance between {\it all} pairs of configurations is often difficult to prove. Consequently, the method of path coupling was introduced by Bubley and Dyer \cite{BD} which expressed the mean coupling distance between arbitrary configurations in terms of the mean coupling distance between {\it neighboring} configurations with respect to some path metric.  
The path coupling method was then generalized for the case when the mean coupling distance does not contract for at least some of the neighboring configurations. See \cite{KOT} and \cite{KO}. This generalization is referred to as {\it aggregate path coupling} and was first developed in \cite{KOT} and then extended in \cite{KO}.  In those two papers, the models studied were classical mean-field models of statistical mechanics which can be viewed as spin models defined on the complete graph $K_n$.  In the current paper, we apply the aggregate path coupling method for the first time beyond $K_n$. 

\bigskip

\section{Aggregate Path Coupling  for the Bipartite Potts model} \label{aggpath} 

We begin with an overview of the aggregate path coupling method for bounding the mixing time of the Glauber dynamics of the bipartite Potts model.  First, the measure concentration result of the large deviation upper bound (\ref{eqn:ldpbound}) makes it sufficient to show contraction of the mean coupling distance between a coupled process where one starts in an arbitrary configuration and the other starts in a configuration for which the macroscopic quantity for the bipartite Potts model, the pair of empirical vectors, is near equilibrium; i.e. $\parallel \big(L_n(\sigma),L_n(\tau)\big) - (\rho, \rho)\parallel_1 < \varepsilon'$.  The proof to this first part is given in general in Theorem 9.2 of \cite{KO}. 

Second, in order to prove contraction of the mean coupling distance of a coupling of the Glauber dynamics of the bipartite Potts model where one starts near equilibrium, we aggregate the intermediate distances over a monotone path in the configuration space $\Lambda^n\times\Lambda^n$ defined below. The aggregation over the discrete path is carried out by integrating over an approximating continuous path in the continuous space $\mathcal{P}_q\times\mathcal{P}_q$. The details of this second step are provided next.

\medskip 

Let $(\sigma,\tau)$ and $(\sigma',\tau')$ be configurations in $\Lambda^n\times\Lambda^n$. Consider a path $\pi$ in $\Lambda^n\times\Lambda^n$ connecting configurations $(\sigma,\tau)$ and $(\sigma',\tau')$,
$$\pi : (\sigma,\tau)=(x_0^1,x_0^2),(x_1^1,x_1^2),\dots,(x_r^1,x_r^2)=(\sigma',\tau').$$

\begin{defn}\label{mono_path}
We say that $\pi$ is a {\it monotone path} if 
\begin{enumerate}
	\item[(i)] $\dstyle\sum_{i=1}^{r}d\big((x_{i-1}^1,x_{i-1}^2),(x_{i}^1,x_{i}^2)\big)=d\big((\sigma,\tau),(\sigma',\tau')\big)$;
	\item[(ii)] for each $k \in \{1,2,\hdots,q\}$ and $j\in \{1,2\}$, the $k$th coordinate of $L_n(x_i^j)$, denoted by $L_{n,k}(x_i^j)$ is monotonic as $i$ increases from $0$ to $r$.
\end{enumerate}
\end{defn}

\medskip
\noindent
Now, fix $\varepsilon>0$ and suppose $\pi : (\sigma,\tau)=(x_0^1,x_0^2),(x_1^1,x_1^2),\dots,(x_r^1,x_r^2)=(\sigma',\tau')$ is a monotone path connecting  $(\sigma,\tau)$ and $(\sigma',\tau')$ such that for each $i \in \{1,\hdots,r\}$,
\begin{equation}\label{ineq:eps}
\varepsilon\leq \big\| \big(L_n(x_i^1), L_n(x_i^2)\big)-\big(L_n(x_{i-1}^1), L_n(x_{i-1}^2)\big) \big\|_1 <2\varepsilon .
\end{equation} 
Equation (\ref{dist_1step}) implies the following bound on the mean distance for a coupling process starting in  configurations $(\sigma,\tau)$ and $(\sigma',\tau')$:
\begin{align}\label{bound_dist}
\E_{\substack{ (\sigma,\tau)\\(\sigma',\tau')}} [d(X,Y)] 
& \leq  \sum_{i=1}^r \mathbb{E}_{\substack{ (x_{i-1}^1,x_{i-1}^2)\\ (x_i^1,x_i^2)}}[d(X_{i-1}, X_i)]   \nonumber \\
 & \leq   \sum\limits_{i=1}^r \left\{ d\big((x_{i-1}^1,x_{i-1}^2),(x_i^1,x_i^2)\big) \cdot \left[1 -{1 \over 2n} \left(1-{\kappa_{1,i}+\kappa_{2,i}+2c \varepsilon^2 \over d\big((x_{i-1}^1,x_{i-1}^2),(x_i^1,x_i^2)\big)/n} \right)  \right]\right\}  \nonumber \\ 
& =  d\big((\sigma,\tau),(\sigma',\tau')\big) \left[ 1-{1 \over 2n}\left(1-{S_1+S_2 + 4c\varepsilon^2 \over  2d\big((\sigma,\tau),(\sigma',\tau')\big)/n }\right) \right]  \nonumber \\ 
&\leq d\big((\sigma,\tau),(\sigma',\tau')\big)\left[ 1-\dstyle\frac{1}{2n}\left( 1-\frac{ S_1+S_2 + 4c\varepsilon^2}{\parallel L_n(\sigma) - L_n(\sigma') \parallel_1 + \parallel L_n(\tau) - L_n(\tau') \parallel_1} \right) \right],
\end{align}
where 
$$\kappa_{1,i}= {1 \over 2}\sum\limits_{k=1}^q \Big| \Big<L_n(x_i^1) - L_n(x_{i-1}^1), \nabla g_k^{H, \beta}(L_n( x_{i-1}^1)) \Big> \Big|,$$
$$\kappa_{2,i}={1 \over 2}\sum\limits_{k=1}^q \Big| \Big<L_n(x_i^2) - L_n(x_{i-1}^2), \nabla g_k^{H, \beta}(L_n( x_{i-1}^2)) \Big> \Big|,$$
$$ S_1=2\dstyle\sum_{i=1}^{r} \kappa_{1,i} =\dstyle\sum_{k=1}^{q}\dstyle\sum_{i=1}^{r}\left| \left\langle L_n(x_i^1)-L_n(x_{i-1}^1),\nabla g_k^{H,\beta}(L_n(x_{i-1}^1)) \right\rangle \right|,$$
and
$$S_2=2\dstyle\sum_{i=1}^{r} \kappa_{2,i} = \dstyle\sum_{k=1}^{q}\dstyle\sum_{i=1}^{r}\left| \left\langle L_n(x_i^2)-L_n(x_{i-1}^2),\nabla g_k^{H,\beta}(L_n(x_{i-1}^2)) \right\rangle \right|.$$
Notice that  (\ref{ineq:eps}) allows applying (\ref{dist_1step}) for every link $\big((x_{i-1}^1,x_{i-1}^2),(x_i^1,x_i^2)\big)$ of the path $\pi$. Thus, (\ref{bound_dist}) is obtained by aggregating (\ref{dist_1step}) along the path $\pi$.

\bigskip
\noindent
 {\bf Aggregate path coupling heuristics.} The classical path coupling relies on showing contraction along any monotone path connecting two configurations, in one time step. Here we observe that we only need to show contraction along \underline{one} monotone path connecting two configurations in order to have the mean coupling distance $\E_{\substack{ (\sigma,\tau)\\(\sigma',\tau')}} [d(X,Y)]$ contract in a single time step. However, finding even one monotone path with which we can show contraction in the equation (\ref{bound_dist}) is not easy. The answer to this is in finding a continuous monotone path $(\gamma, \widetilde{\gamma})$ in $\mathcal{P}_q \times \mathcal{P}_q$ connecting $\big(L_n(\sigma),L_n(\tau)\big)$ and  $\big(L_n(\sigma'),L_n(\tau')\big)$, such that 
\begin{equation}\label{bound_dist2}
{ \sum\limits_{k=1}^{q}\int\limits_\gamma\left|\langle \nabla g_{k}^{H,\beta}(x),dx\rangle \right | ~+~ \sum\limits_{k=1}^{q}\int\limits_{\widetilde{\gamma}}\left|\langle \nabla g_{k}^{H,\beta}(y),dy\rangle \right | \over  \parallel L_n(\sigma) - L_n(\sigma') \parallel_1 + \parallel L_n(\tau) - L_n(\tau') \parallel_1 } ~<1 
\end{equation}
Although  $(\gamma, \widetilde{\gamma})$  is a continuous  path in continuous space $\mathcal{P}_q \times \mathcal{P}_q$, it serves in finding a monotone path 
$$\pi : (\sigma,\tau)=(x_0^1,x_0^2),(x_1^1,x_1^2),\dots,(x_r^1,x_r^2)=(\sigma',\tau')$$
in $\Lambda^n\times\Lambda^n$ connecting configurations $(\sigma,\tau)$ and $(\sigma',\tau')$
such that 
$$\big(L_n(x_0^1),L_n(x_0^2)\big),~\big(L_n(x_1^1),L_n(x_1^2)\big),\dots,~\big(L_n(x_r^1),L_n(x_r^2)\big)$$
in $\mathcal{P}_q \times \mathcal{P}_q$ are positioned along $(\gamma, \widetilde{\gamma})$ and satisfy (\ref{ineq:eps}). The quantities 
$S_1$ and $S_2$ defined in (\ref{bound_dist}) are the Riemann sums approximating $~\sum\limits_{k=1}^{q}\int\limits_\gamma\left|\langle \nabla g_{k}^{H,\beta}(x),dx\rangle \right | ~$ and $~ \sum\limits_{k=1}^{q}\int\limits_{\widetilde{\gamma}}\left|\langle \nabla g_{k}^{H,\beta}(y),dy\rangle \right |$
respectively. Therefore we obtain
$${S_1+S_2+ 4c\varepsilon^2  \over  \parallel L_n(\sigma) - L_n(\sigma') \parallel_1 + \parallel L_n(\tau) - L_n(\tau') \parallel_1 }  ~<1,$$
 for $\ve$ small enough and $n$ large enough. This will imply contraction of the mean coupling distance $\E_{\substack{ (\sigma,\tau)\\(\sigma',\tau')}} [d(X,Y)]$ in (\ref{bound_dist}). 

\medskip
\noindent
The above inequality (\ref{bound_dist2}) motivates the definition of the aggregate $g$-variation between a pair of points $(x',y')$ and $(x'',y'')$ in $\mathcal{P}_q\times\mathcal{P}_q$ along a continuous monotone path $(\gamma, \widetilde{\gamma})$ defined as follows
\begin{align*}
D_{(\gamma, \widetilde{\gamma})}^{g}((x',y'),(x'',y'')) &= \sum\limits_{k=1}^{q}\int_\gamma\left|\langle \nabla g_{k}^{H,\beta}(x),dx\rangle \right | ~+~ \sum\limits_{k=1}^{q}\int_{\widetilde{\gamma}}\left|\langle \nabla g_{k}^{H,\beta}(y),dy\rangle \right |\\
&= D_\gamma^{g}(x',x'') ~+~ D_{\widetilde{\gamma}}^{g}(y',y''),
\end{align*}
where $~D_\gamma^{g}(x',x'')= \sum\limits_{k=1}^{q}\int_\gamma\left|\langle \nabla g_{k}^{H,\beta}(x),dx\rangle \right |~$ was defined and analyzed in \cite{KO}.

\bigskip
\noindent
From Theorem \ref{LD_BCWP} and  Lemma \ref{beta_sq} we have that for $\beta<\beta_s$, the point $(\rho, \rho) \in \mathcal{P}_q\times\mathcal{P}_q$ is the unique equilibrium macrostate. Thus, utilizing the aggregate path coupling method developed in \cite{KO} and applied there to the Generalized  Curie-Weiss-Potts model, we have the next proposition that follows immediately from Lemma 10.4 in \cite{KO}.
\begin{prop}\label{condKO_3}
	Suppose $\beta < \beta_s(q)$ and let $(\rho, \rho)$ be the unique equilibrium macrostate. Then
	$$\limsup_{(x,y)\to (\rho,\rho)}\max\left\{
	\frac{\parallel g^{H,\beta}(x)- g^{H,\beta}(\rho) \parallel_1 }{\parallel x-\rho\parallel_1 }, 
	\frac{\parallel g^{H,\beta}(y)- g^{H,\beta}(\rho) \parallel_1}{\parallel y-\rho\parallel_1} \right\}  < 1$$
\end{prop}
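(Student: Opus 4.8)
\medskip
\noindent
\textbf{Proof proposal.} The plan is to read the proposition as the assertion that the map $g^{H,\beta}\colon\mathcal{P}_q\to\mathcal{P}_q$, $g_k^{H,\beta}(z)=e^{\beta z_k}/\sum_j e^{\beta z_j}$, is a strict $\ell^1$-contraction in a neighborhood of its fixed point $\rho$, and to establish this by linearizing $g^{H,\beta}$ at $\rho$. Since every coordinate of $\rho$ equals $1/q$, we have $g^{H,\beta}(\rho)=\rho$, so both difference quotients in the statement have the form $\|g^{H,\beta}(z)-g^{H,\beta}(\rho)\|_1/\|z-\rho\|_1$ and it suffices to control $Dg^{H,\beta}(\rho)$.

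First I would compute the Jacobian. From $g_k^{H,\beta}(z)=e^{\beta z_k}/\sum_j e^{\beta z_j}$ one gets $\partial_l g_k^{H,\beta}(z)=\beta\bigl(\delta_{kl}\,g_k^{H,\beta}(z)-g_k^{H,\beta}(z)\,g_l^{H,\beta}(z)\bigr)$, and evaluating at $\rho$ yields $Dg^{H,\beta}(\rho)=\tfrac{\beta}{q}\bigl(I-\tfrac1q J\bigr)$, where $J$ is the $q\times q$ all-ones matrix. The key observation is that for every $x\in\mathcal{P}_q$ the vector $x-\rho$ lies in the tangent space $V_0=\{v\in\R^q:\sum_k v_k=0\}$ of the simplex, and $J$ annihilates $V_0$; hence $Dg^{H,\beta}(\rho)$ acts on $V_0$ as the scalar operator $\tfrac{\beta}{q}\,\mathrm{Id}$, so that $\|Dg^{H,\beta}(\rho)(x-\rho)\|_1=\tfrac{\beta}{q}\|x-\rho\|_1$ exactly. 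Because $g^{H,\beta}$ is smooth (it extends via the same formula to all of $\R^q$), differentiability at $\rho$ gives $g^{H,\beta}(x)-\rho=Dg^{H,\beta}(\rho)(x-\rho)+o(\|x-\rho\|_1)$; dividing by $\|x-\rho\|_1$ and letting $x\to\rho$ shows $\|g^{H,\beta}(x)-g^{H,\beta}(\rho)\|_1/\|x-\rho\|_1\to\beta/q$, and likewise in $y$. Hence the $\limsup$ in the proposition equals $\beta/q$.

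It remains to show $\beta/q<1$, and this is where the hypothesis $\beta<\beta_s(q)$ enters. By Lemma \ref{beta_sq}, $\beta<\beta_s(q)\le\beta_c(q)=\tfrac{2(q-1)}{q-2}\log(q-1)$, so it is enough to check the elementary inequality $\beta_c(q)<q$, i.e. $2(q-1)\log(q-1)<q(q-2)$ for every integer $q\ge3$ (numerically immediate for $q=3,4,5$, and for all $q\ge3$ using $\log(q-1)\le(q-1)/e$). Alternatively, one can read $\beta_s(q)\le q$ directly off definition (\ref{mixcritical}): testing the condition there with $x=y=\varphi(s)$ (the curve of Theorem \ref{LD_BCWP}) and $k=1$, the expansion $g_1^{H,\beta}(\varphi(s))-\varphi(s)_1=\tfrac{(q-1)s}{q}\bigl(\tfrac{\beta}{q}-1\bigr)+O(s^2)$ as $s\downarrow0$ is incompatible with $g_1^{H,\beta}(\varphi(s))<\varphi(s)_1$ unless $\beta\le q$. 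Either way $\beta<\beta_s(q)$ forces $\beta/q<1$, which completes the proof; this is just the bipartite (two-copy) instance of Lemma 10.4 of \cite{KO}.

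I do not expect a serious obstacle: the argument is a linearization at a fixed point, and the remainder estimate is routine. The one point requiring care — and the only place where $\beta<\beta_s(q)$, rather than merely $\beta<\beta_c(q)$, is genuinely used — is the final strict inequality $\beta/q<1$, which one obtains either by combining Lemma \ref{beta_sq} with the elementary bound $\beta_c(q)<q$, or by extracting $\beta_s(q)\le q$ from the defining formula (\ref{mixcritical}) as sketched above.
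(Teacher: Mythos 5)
Your proposal is correct, but it takes a genuinely different route from the paper. The paper's own proof is a two-line citation: by (the proof of) Lemma \ref{beta_sq}, $\beta_s(q)$ coincides with the Curie--Weiss--Potts threshold $\beta_s^{CWP}(q)$, and since $g^{H,\beta}$ is literally the same function as in that model with unique equilibrium $\rho$, the statement is imported wholesale from Lemma 10.4 of \cite{KO}. You instead linearize directly: the Jacobian formula $Dg^{H,\beta}(\rho)=\tfrac{\beta}{q}\bigl(I-\tfrac{1}{q}J\bigr)$ is right, $J$ does annihilate the sum-zero tangent space that contains $x-\rho$ for $x\in\mathcal{P}_q$, so the limsup in Proposition \ref{condKO_3} equals exactly $\beta/q$; and your closing step $\beta<\beta_s(q)\le\beta_c(q)<q$ is also sound (the inequality $2(q-1)\log(q-1)<q(q-2)$ for $q\ge 3$ follows from $\log t\le t/e$, and your alternative extraction of $\beta_s(q)\le q$ from (\ref{mixcritical}) via the curve $\varphi$ checks out, since $g_1^{H,\beta}(\varphi(s))=e^{\beta s}/(e^{\beta s}+q-1)$). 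What your route buys is a self-contained argument with the sharp constant: it exhibits the local contraction ratio at $(\rho,\rho)$ as $\beta/q$, hence the conclusion actually holds for all $\beta<q$, a strictly weaker hypothesis than the one stated; this makes transparent that $\beta<\beta_s(q)$ is genuinely needed elsewhere in the argument (in the aggregate estimate along the path toward $(\rho,\rho)$, i.e.\ Case I of Lemma \ref{exp_decay} via Lemma 10.3 of \cite{KO}), not for this local statement --- so your concluding remark that the inequality $\beta/q<1$ is the one place where $\beta<\beta_s(q)$ rather than $\beta<\beta_c(q)$ is essential is slightly off, since by your own bound $\beta<\beta_c(q)$ (indeed $\beta<q$) already suffices. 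What the paper's route buys is brevity and consistency with the general framework of \cite{KO}. One cosmetic point: the limsup should be understood over $x\neq\rho$ and $y\neq\rho$, as is implicit in the paper.
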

\bp
By Lemma \ref{beta_sq} we have that  $\beta_s(q)$ is equal to the mixing time critical value for  the Curie-Weiss-Potts model. As
$g^{H,\beta}(x)$ is the same function for the  Curie-Weiss-Potts model, and $\rho=\left(\frac{1}{q},\dots,
\frac{1}{q}\right)$, employing Lemma 10.4 of \cite{KO} we conclude the proof of the Proposition \ref{condKO_3}.
\ep

\bigskip
\noindent
We now state and prove the main contraction result for the mean coupling distance where one of the coupled processes starts near the equilibrium. 
\begin{lemma}\label{exp_decay}
Suppose $\beta < \beta_s(q)$. Let $(X,Y)$ be a coupling of the Glauber dynamics of the bipartite Potts model starting in configurations $(\sigma,\tau)$ and $(\sigma',\tau')$, and let $(\rho, \rho)$ be the single equilibrium macrostate of the corresponding canonical ensemble $P_{n, n, \beta}$ defined in (\ref{ensemble_BCWP}). Then there exists an $\alpha >0$ and $\varepsilon'$ small enough such that for $n$ large enough,
$$\E_{\substack{ (\sigma,\tau)\\(\sigma',\tau')}} [d(X,Y)] \leq e^{-\alpha/n}d\big((\sigma,\tau),(\sigma',\tau')\big),$$
whenever $\parallel \big(L_n(\sigma),L_n(\tau)\big) - (\rho, \rho)\parallel_1 < \varepsilon'$.
\end{lemma}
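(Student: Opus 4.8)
The plan is to exploit the bound \eqref{bound_dist} obtained by aggregating the one-step estimate \eqref{dist_1step} along a carefully chosen monotone path, and to show that the ratio appearing in the bracket is bounded away from $1$ uniformly over starting configurations with $\big(L_n(\sigma),L_n(\tau)\big)$ in an $\varepsilon'$-neighborhood of $(\rho,\rho)$. Concretely, given $(\sigma,\tau)$ and $(\sigma',\tau')$ with $\|(L_n(\sigma),L_n(\tau))-(\rho,\rho)\|_1<\varepsilon'$, I would first pick a continuous monotone path $(\gamma,\widetilde\gamma)$ in $\mathcal{P}_q\times\mathcal{P}_q$ from $\big(L_n(\sigma),L_n(\tau)\big)$ to $\big(L_n(\sigma'),L_n(\tau')\big)$ — for instance, the coordinatewise monotone concatenation that first moves $\gamma$ from $L_n(\sigma)$ to $L_n(\sigma')$ and then $\widetilde\gamma$ from $L_n(\tau)$ to $L_n(\tau')$ — and then choose a discrete monotone path $\pi$ in $\Lambda^n\times\Lambda^n$ whose images under $L_n$ shadow $(\gamma,\widetilde\gamma)$ and satisfy the spacing condition \eqref{ineq:eps}. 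This is exactly the construction the ``aggregate path coupling heuristics'' paragraph sets up.

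The heart of the argument is the estimate on the aggregate $g$-variation. The Riemann sums $S_1,S_2$ in \eqref{bound_dist} approximate $D_\gamma^g(L_n(\sigma),L_n(\sigma'))$ and $D_{\widetilde\gamma}^g(L_n(\tau),L_n(\tau'))$ respectively; so up to an $O(\varepsilon)$ error (from the Riemann approximation) and the additive $4c\varepsilon^2$ term, the bracket in \eqref{bound_dist} is controlled by
\[
\frac{D_\gamma^g(L_n(\sigma),L_n(\sigma'))+D_{\widetilde\gamma}^g(L_n(\tau),L_n(\tau'))}{\|L_n(\sigma)-L_n(\sigma')\|_1+\|L_n(\tau)-L_n(\tau')\|_1}.
\]
By Proposition \ref{condKO_3}, there is $r<1$ and a neighborhood $U$ of $\rho$ such that $\|g^{H,\beta}(x)-g^{H,\beta}(\rho)\|_1\le r\|x-\rho\|_1$ for $x\in U$, which (as shown in \cite{KO}, Lemma 10.4 and the surrounding analysis) translates into a bound of the form $D_\gamma^g(x',x'')\le r'\,\|x'-x''\|_1$ along any monotone path contained in $U$, for some $r'<1$. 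The point $L_n(\sigma),L_n(\tau)\in U$ provided $\varepsilon'$ is small; the point $L_n(\sigma'),L_n(\tau')$ need not be in $U$, but here one uses the monotone-path/aggregation device exactly as in \cite{KO}: one splits the path into the portion inside $U$ — where the factor $r'<1$ applies — and the portion outside $U$, and argues that the contribution from the outside portion is offset because $\nabla g_k^{H,\beta}$ is uniformly bounded on the compact simplex while the ``near-equilibrium'' end forces enough of the path length to lie in the contracting region, or one simply invokes Lemma 10.4 of \cite{KO} which already packages this for the Curie-Weiss-Potts $g$-function applied separately to the $\gamma$ and $\widetilde\gamma$ components. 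Since $g^{H,\beta}$ here acts on each side of the bipartite graph exactly as the Curie-Weiss-Potts single-coordinate map, the one-sided estimates are verbatim those of \cite{KO}, and the bipartite estimate is their sum over the two independent coordinates, with a common denominator — the inequality survives the summation because $\frac{a_1+a_2}{b_1+b_2}\le\max\{\frac{a_1}{b_1},\frac{a_2}{b_2}\}$ when $a_i\le r' b_i$.

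Having fixed such an $r'<1$, I would then choose $\varepsilon$ small enough (to absorb the Riemann-sum error and the $4c\varepsilon^2$ term into, say, $(1+r')/2<1$) and $\varepsilon'$ small enough that all relevant empirical vectors remain in the regime where the estimate above holds; then the bracket in \eqref{bound_dist} is at most $1-\frac{1}{2n}(1-\frac{1+r'}{2})=1-\frac{1-r'}{4n}$, so setting $\alpha=(1-r')/4>0$ and using $1-\alpha/n\le e^{-\alpha/n}$ gives the claimed inequality $\E_{(\sigma,\tau),(\sigma',\tau')}[d(X,Y)]\le e^{-\alpha/n}d\big((\sigma,\tau),(\sigma',\tau')\big)$, valid for all $n$ large enough. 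I expect the main obstacle to be the precise bookkeeping that lets a bound valid only near $\rho$ (Proposition \ref{condKO_3}) yield a bound on the aggregate $g$-variation along a path at least one of whose endpoints is near $\rho$ but the other is arbitrary in $\mathcal{P}_q\times\mathcal{P}_q$; this is the role of the monotone-path hypothesis in Definition \ref{mono_path} and is handled in \cite{KO} via Lemma 10.4, so the work here is mainly to verify that the bipartite two-component structure does not break that argument — which it does not, because $g^{H,\beta}$ decouples across the two sides and the denominator is additive.
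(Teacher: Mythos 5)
Your overall scaffolding (shadow a continuous monotone path by a discrete one satisfying (\ref{ineq:eps}), view $S_1,S_2$ as Riemann sums, absorb the $4c\varepsilon^2$ and discretization errors, and convert a bracket bounded away from $1$ into $e^{-\alpha/n}$ contraction) matches the paper. But the central analytic step — why the aggregate $g$-variation along the path is strictly less than the $\ell^1$-displacement — is where your argument has a genuine gap. You anchor the continuous path at $\big(L_n(\sigma),L_n(\tau)\big)$ and try to control it using Proposition \ref{condKO_3}, which is only a \emph{local} statement at $\rho$ (it comes from Lemma 10.4 of \cite{KO}). Your proposed fix for the portion of the path outside a small neighborhood $U$ of $\rho$ — that the outside contribution is ``offset because $\nabla g_k^{H,\beta}$ is uniformly bounded'' — does not work: when $\big(L_n(\sigma'),L_n(\tau')\big)$ is far from $(\rho,\rho)$, almost all of the path lies outside $U$, and a uniform bound on $\nabla g^{H,\beta}$ there gives a ratio that can exceed $1$ (indeed, for $\beta$ near $\beta_s$ local contraction genuinely fails at some points of the simplex; that failure is the whole reason aggregate path coupling exists). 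Citing Lemma 10.4 of \cite{KO} for this global estimate is a misattribution; the global statement is Lemma 10.3 of \cite{KO}, and it is not about paths with one endpoint merely near $\rho$: it requires the straight-line monotone path emanating from the equilibrium $(\rho,\rho)$ itself, and its proof uses the defining inequalities of $\beta_s$ ($g_k^{H,\beta}(x)<y_k$, $g_k^{H,\beta}(y)<x_k$ for coordinates exceeding $1/q$), not boundedness of the gradient.

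The paper's proof avoids this by taking $(\gamma,\widetilde\gamma)$ to be the segments from $(\rho,\rho)$ to $(z',w')$, applying Lemma 10.3 of \cite{KO} to get the ratio $\le 1-\delta/2$, and then exploiting that the actual starting magnetization $(z,w)$ is within $\varepsilon'=\varepsilon^2\delta/M$ of $(\rho,\rho)$, so the discrete path from $(z,w)$ dotting that continuous path still satisfies (\ref{delta3}) with $1-\delta/3$. It also splits off a second case you do not treat: when $\|(z',w')-(\rho,\rho)\|_1<\varepsilon+\varepsilon'$ the spacing requirement (\ref{ineq:eps}) cannot be met, so no aggregation is possible and one must instead use the direct one-step estimate from (\ref{eq:k}) together with Proposition \ref{condKO_3} (this is where the local lemma is actually used). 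To repair your proposal you would need to (i) re-anchor the continuous path at $(\rho,\rho)$ and invoke the global aggregate-variation bound (Lemma 10.3 of \cite{KO}), quantifying the $\varepsilon'$-perturbation of the starting point, and (ii) add the near-equilibrium case with the direct local contraction; as written, the step bounding the path's far-from-$\rho$ portion would fail.
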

\bp
We will follow the steps in the proof of Lemma 9.1 in \cite{KO}. 
Let $\beta<\beta_s$. 

{\bf Case I.} Suppose $\ve>0$ is sufficiently small. Consider a pair of configurations $(\sigma,\tau)$ and $(\sigma',\tau')$ with magnetizations 
$$\big(L_n(\sigma),L_n(\tau)\big)=(z,w) \quad \text{ and } \quad \big(L_n(\sigma'),L_n(\tau')\big)=(z',w').$$  
Also, consider a continuous monotone path $(\gamma, \widetilde{\gamma})$ connecting $(\rho,\rho)$ to $(z',w')$, defined as follows
$$\gamma=\{x(t)=(1-t)\rho  + tz' ~:~ t\in[0,1]\} \quad \text{ and }\quad  \widetilde{\gamma} =\{y(t)=(1-t)\rho  + tw' ~:~ t\in[0,1]\}.$$
Then, as it was proved in Lemma 10.3  of \cite{KO}, there exists $\delta \in (0,1)$ such that 
$${D_{(\gamma, \widetilde{\gamma})}^{g}((x,y),(\rho,\rho)) \over \parallel x-\rho\parallel_1+\parallel y-\rho\parallel_1}
={D_\gamma^g(x,\rho)+D_{\widetilde{\gamma}}^g(y,\rho) \over \parallel x-\rho\parallel_1+\parallel y-\rho\parallel_1}\leq 1-\frac{\delta}{2}.$$
Let $\varepsilon'=\varepsilon^2\delta/M$ for $M$ sufficiently large.  Suppose that $\|(z',w')- (\rho, \rho)\|_1 \geq \ve +\ve'$ and $\|(z,w)- (\rho, \rho)\|_1< \ve'$.

\medskip
\noindent
Thus, provided $\ve$ is sufficiently small and $M$ sufficiently large, for all $n$ large enough, there is a discrete {\it monotone path} in $\mathcal{P}_n\times\mathcal{P}_n$,
$$(z,w)=(z_0,w_0),(z_1,w_1),\dots,(z_r,w_r)= (z',w'),$$ 
approximating (dotting) the continuous monotone path $(\gamma,\widetilde{\gamma})$, such that
$$\ve ~\leq~ \parallel z_i-z_{i-1} \parallel_1 + \parallel w_i-w_{i-1} \parallel_1 ~<~ 2\varepsilon\quad \mbox{for}\quad i=1,2,\dots,r,$$
for which 
\be \label{delta3}
\frac{\dstyle\sum_{k=1}^{q}\dstyle\sum_{i=1}^{r}\left| \left\langle z_i-z_{i-1},\nabla g_k^{H,\beta}(z_{i-1}) \right\rangle \right|+\dstyle\sum_{k=1}^{q}\dstyle\sum_{i=1}^{r}\left| \left\langle w_i-w_{i-1},\nabla g_k^{H,\beta}(w_{i-1}) \right\rangle \right|}{\parallel z'-z \parallel_1 +\parallel w'-w \parallel_1}< 1-\frac{\delta}{3}.
\ee

\medskip
\noindent
Next, one can construct a monotone path as in Definition \ref{mono_path}
$$\pi : (\sigma,\tau)=(x_0^1,x_0^2),(x_1^1,x_1^2),\dots,(x_r^1,x_r^2)=(\sigma',\tau').$$ 
connecting configurations $(\sigma,\tau)$ and $(\sigma',\tau')$ such that
$$\big(L_n(x_i^1),L_n(x_i^2)\big)=(z_i,w_i).$$

\bigskip
\noindent
Hence, by equation (\ref{bound_dist}), 
\begin{align*}
\E_{\substack{ (\sigma,\tau)\\(\sigma',\tau')}} [d(X,Y)] & \leq d\big((\sigma,\tau),(\sigma',\tau')\big)\left[ 1-\dstyle\frac{1}{2n}\left( 1-\frac{ S_1+S_2 + 4c\varepsilon^2}{\parallel L_n(\sigma) - L_n(\sigma') \parallel_1 + \parallel L_n(\tau) - L_n(\tau') \parallel_1} \right) \right]\\
  &\leq d((\sigma,\sigma'),(\tau,\tau'))\left[ 1 - \dstyle\frac{\delta/3-\delta/12}{2n} \right]=
  d((\sigma,\sigma'),(\tau,\tau'))\left[ 1 - \dstyle\frac{\delta}{8n} \right]\\
\end{align*}
as $~{4c\varepsilon^2 \over  \|(z',w')-(z,w)\|_1 } ~\leq ~4c\ve ~\leq ~\delta/12~$ for 
$\ve$ small enough,
where we used the same notation as earlier, 
$$S_1=\dstyle\sum_{k=1}^{q}\dstyle\sum_{i=1}^{r}\left| \left\langle L_n(x_i^1)-L_n(x_{i-1}^1),\nabla g_k^{H,\beta}(L_n(x_{i-1}^1)) \right\rangle \right|$$
and
$$S_2= \dstyle\sum_{k=1}^{q}\dstyle\sum_{i=1}^{r}\left| \left\langle L_n(x_i^2)-L_n(x_{i-1}^2),\nabla g_k^{H,\beta}(L_n(x_{i-1}^2)) \right\rangle \right|.$$

\bigskip
\noindent
{\bf Case II.} Let $\ve$ and $\ve'$ be as in Case I. Suppose $\big(L_n(\sigma),L_n(\tau)\big)=(z,w)$ and $\big(L_n(\sigma'),L_n(\tau')\big)=(z',w')$  such that $\|(z',w')- (\rho, \rho)\|_1< \ve +\ve'$ and $\|(z,w)- (\rho, \rho)\|_1< \ve'$.

\medskip
\noindent
Then, similarly to (\ref{dist_1step}), equation (\ref{eq:k}) and Proposition \ref{condKO_3} imply there is a $\xi>0$ such that for all $n$ large enough,
{\footnotesize
\beas
\E_{\substack{ (\sigma,\tau)\\(\sigma',\tau')}} [d(X,Y)] &  \leq & d((\sigma,\sigma'),(\tau,\tau')) \cdot \left[1-{1 \over 2n}\left(1-{\|g^{H,\beta}\big(L_n(\sigma),L_n(\tau)\big)-g^{H,\beta}\big(L_n(\sigma'),L_n(\tau')\big)\|_1 \over \parallel L_n(\sigma) - L_n(\sigma') \parallel_1 + \parallel L_n(\tau) - L_n(\tau') \parallel_1} \right)  \right] + O\left( {1 \over n^2}\right)\\ 
&  \leq & d((\sigma,\sigma'),(\tau,\tau')) \cdot \left[1-{\xi \over n} \right] + O\left( {1 \over n^2}\right)\\
&  \leq & d((\sigma,\sigma'),(\tau,\tau')) \cdot \left[1-{\xi \over 2n} \right]. 
\eeas
}

\medskip
\noindent 
This, concludes  the proof of Lemma \ref{exp_decay}.
\ep
To conclude, Lemma \ref{exp_decay} above and Theorem 9.2 in \cite{KO} yield the proof to our main result Theorem \ref{rap_mixBCWP}.  Let us list the main steps of the proof as applied in \cite{KO}. 
First, recall that in order to bound a total variation distance between a Markov chain $X_t$ and its stationary distribution via the coupling inequality it is sufficient to couple two copies of the Markov chain,
$X_t$ and $Y_t$, where $Y_0$ is distributed according to the stationary distribution and $X_0$ could be any state. See \cite{AF, LPW, L}. Thus, for a given $\beta < \beta_s(q)$, 
we couple two copies $X_t=(X_t^1,X_t^2)$ and $Y_t=(Y_t^1,Y_t^2)$ of the Markov chain defined in Section \ref{sec:gd}, where $Y_0$ and therefore $Y_t$ for every value $t \geq 0$ is distributed according to the stationary distribution $P_{n,n,\beta}$ over the state space $\Lambda_n \times \Lambda_n$. 

Now, it is known from Lemma \ref{beta_sq} that $\beta_s(q) \leq \beta_c(q)$. Thus, $\beta < \beta_s(q)$ implies that $\mathcal{E}_\beta=\{ (\rho,\rho) \}$ by Theorem \ref{LD_BCWP}. 
By the large deviation principle in Theorem \ref{LDP_Knn}, the probability measure $P_{n,n,\beta}$ is concentrated on the configurations $(\sigma,\tau)\in \Lambda_n \times \Lambda_n$ with 
magnetization $\big(L_n(\sigma),L_n(\tau)\big)$ in the neighborhood of $(\rho,\rho)$ in $\mathcal{P}_q \times \mathcal{P}_q$. 
Therefore, the event $\parallel \big(L_n(Y_t^1),L_n(Y_t^2)\big) - (\rho, \rho)\parallel_1 < \varepsilon'$ required for the contraction result in Lemma \ref{exp_decay} is satisfied since the probability of its complement
is exponentially small, i.e.
$$P_{n,n,\beta}\left(\parallel \big(L_n(Y_t^1),L_n(Y_t^2)\big) - (\rho, \rho)\parallel_1 \geq \varepsilon' \right) < e^{-{n \over \xi'}I_\beta(\varepsilon')} \qquad \text{ for }~\xi'>1.$$
Hence, the mean distance between the configurations $X_t=(X_t^1,X_t^2)$ and $Y_t=(Y_t^1,Y_t^2)$ shrinks by the multiple of $e^{-\alpha/n}$ on each time step, and becomes $\ll 1$ after an order of $O(n \log{n})$ time steps.

\medskip
Finally, as we already stated following the statement of Theorem \ref{rap_mixBCWP}, the standard bottleneck ratio argument applying the Cheeger constant proves slow mixing for $\beta > \beta_s(q)$. See \cite{LPW}.



\bibliographystyle{amsplain}

\end{document}